\newtheorem{theorem}{Theorem}[section]
\theoremstyle{remark}
\newtheorem{remark}{Remark}[section]
\numberwithin{equation}{section}
\begin{document}
\title{Spectral Densities of Singular Values of Products of Gaussian and Truncated Unitary Random Matrices}

\author{Thorsten Neuschel
\thanks{Institut de Recherche en Math\'{e}matique et Physique, Universit\'{e} Catholique de Louvain, Chemin du
Cyclotron 2, B-1348 Louvain-La-Neuve, Belgium. E-mail: Thorsten.Neuschel@uclouvain.be}}

\maketitle

\abstract{We study the densities of limiting distributions of squared singular values of high-dimensional matrix products composed of independent complex Gaussian (complex Ginibre) and truncated unitary matrices which are taken from Haar distributed unitary matrices with appropriate dimensional growth. In the general case we develop a new approach to obtain complex integral representations for densities of measures whose Stieltjes transforms satisfy algebraic equations of a certain type. In the special cases in which at most one factor of the product is a complex Gaussian we derive elementary expressions for the limiting densities using suitable parameterizations for the spectral variable. Moreover, in all cases we study the behavior of the densities at the boundary of the spectrum.}

\paragraph{Keywords} Random matrix theory, products of random matrices, spectral density, singular values, free multiplicative convolution

\paragraph{Mathematics Subject Classification (2010)}   60E99, 46L54

\section{Introduction}

In this paper we study the densities of the limiting distributions of squared singular values of products of independent random matrices of the type
\begin{equation}\label{mixedproduct}Y_{r,s}=G_r\cdots G_{s+1}T_s \cdots T_1,
\end{equation}
where the \(j\)-th factor is of dimension \((n+\nu_j)\times(n+\nu_{j-1})\) for fixed \(\nu_j\geq 0\), \(1\leq j \leq r\) and \(\nu_0 =0\). Each factor \(G_j\) is a standard complex Gaussian matrix having independent standard complex Gaussian entries (such matrices are also known as complex Ginibre matrices) and each \(T_j\) is a truncated unitary matrix taken (as the upper left block) from a Haar distributed unitary matrix \(U_j\) of dimension \(\ell_j \times \ell_j\) with \(\ell_j\geq 2n+\nu_j +\nu_{j-1}\). The squared singular values of the product in \eqref{mixedproduct} are defined as the nonnegative eigenvalues of the \(n\)-dimensional square matrix \(Y_{r,s}^{\ast}Y_{r,s}\), where \(Y_{r,s}^{\ast}\) denotes the conjugate transpose of \(Y_{r,s}\). 

In the last years many contributions were made to the study of distributions of eigenvalues and singular values of such products of random matrices, not least due to their relevance to different areas of physics, see, e.g., \cite{Ake3, Ake4, Burda1, Burda2, Forrester1, LW, Penson}. On the level of finite dimensions, recent investigations have shown that both eigenvalues and singular values exhibit the structure of determinantal point processes and the corresponding correlation kernels have been subject of intensive investigations \cite{Ake1, Ake2, CKW, KKS, KS, KZ}. Likewise, on the level of infinite dimensions, it is of much interest to derive explicit information about the limiting distributions of the eigenvalues and singular values of such products. In the latter case this leads to the study of free convolutions of probability measures introduced by Voiculescu, which do not reveal their analytic structure easily, see e.g. \cite{Bercovici}, \cite{Biane}.

It is a classical result \cite{MP} that the appropriately rescaled singular values of a standard Gaussian matrix converge weakly, almost surely, to a deterministic limit distribution. More precisely, as \(n\rightarrow \infty\), the eigenvalues of \(\frac{1}{n}G_j^{\ast}G_j\) converge weakly, almost surely, to the Marchenko-Pastur distribution on \([0,4]\) with density
\begin{equation}\label{MP}x\mapsto \frac{1}{2\pi}\frac{\sqrt{4-x}}{\sqrt{x}},
\end{equation}
which is also known as the asymptotic zero distribution of suitably rescaled classical Laguerre polynomials. Moreover, it is well known that as \(n\rightarrow \infty\) the eigenvalues of \(T_j^{\ast}T_j\) converge weakly, almost surely, to the arcsine distribution on \([0,1]\) with density
\begin{equation}\label{arcsine}x\mapsto \frac{1}{\pi}\frac{1}{\sqrt{x(1-x)}},
\end{equation}
if we determine the growth of the dimensions of \(U_j\) appropriately, for instance by the condition that \(\ell_{j}-2n\) is independent of \(n\), which is the regime we will subsequently focus on. The latter distribution is the asymptotic zero distribution of classical Jacobi polynomials rescaled onto the unit interval. Both limiting distributions are member of a two-parameter family of measures called Raney distributions denoted by \(R_{\alpha,\beta}\) with real parameters \(\alpha>1\) and \(0<\beta \leq\alpha\). The measure \(R_{\alpha,\beta}\) is compactly supported on the positive real axis and it is given by the moment sequence (also called Raney numbers)
\[R_{\alpha,\beta}(k)=\frac{\beta}{k\alpha +\beta}\binom{k\alpha +\beta}{k},\quad k\in\mathbb{N}_{0}.\]
For a recent investigation of the Raney distribution in the context of random matrix theory see \cite{Penson}, \cite{Forrester2} and the references therein. An important special case of Raney distributions are the Fuss-Catalan distributions of order \(r\) denoted by \(FC_r\), which are obtained by choosing the parameters \(\alpha=r+1\) and \(\beta=1\) for a positive integer \(r\) and the moments of \(FC_r\) are called Fuss-Catalan numbers. Investigating the moments of the limiting measures in \eqref{MP} and \eqref{arcsine} shows that the Marchenko-Pastur distribution can be identified as the Fuss-Catalan distribution of order \(1\) (or equivalently as \(R_{2,1}\)) whereas the arcsine distribution can be identified as the Raney distribution \(R_{1,\frac{1}{2}}\).

A powerful machinery to characterize limiting distributions of eigenvalues of products of random matrices is the notion of free multiplicative convolution which was developed in free probability theory (see, e.g., \cite{Speicher, Voiculescu}). Using this approach it can be shown \cite{JPM} that as \(n\rightarrow \infty\) the singular values of the rescaled product
\begin{equation}\label{scaledprod}Z_{r,s}=\frac{1}{n^{r-s}} Y_{r,s}^{\ast}Y_{r,s}
\end{equation}
converge weakly, almost surely, to a compactly supported measure \(\mu_{r,s}\) on the positive real axis given by the free multiplicative convolutions of Raney distributions
\begin{equation}\label{murs}\mu_{r,s}=R_{r-s+1,1}\boxtimes R_{1,\frac{1}{2}}^{\boxtimes s}.\end{equation}
The moments of \(\mu_{r,s}\) have recently been studied \cite{JPM} showing that they can be found explicitly in terms of Jacobi polynomials by
\[\mu_{r,s}(0) = 1\]
and for \(k\geq 1\)
\[
\mu_{r,s}(k) = \frac{1}{k2^{ks}}  P_{k-1}^{(\alpha_{k-1}, \beta_{k-1})}\left(0\right),
\]
where $P_k^{(\alpha_k, \beta_k)}(x)$ are the Jacobi polynomials with varying parameters $\alpha_k = rk + r + 1$ and $\beta_k = -(r+1-s)k - (r+2-s)$ as defined in \cite{Szego}.

However, so far the densities of \(\mu_{r,s}\) are known only in the special cases \(s=0\), \(s=1\) and \(s=r\). In the case \(s=0\) the product \eqref{mixedproduct} consists only of complex Gaussian matrices and the limiting distribution of eigenvalues of \eqref{scaledprod} is given by the Fuss-Catalan distribution of order \(r\). The corresponding density can be expressed in terms of elementary functions \cite{Bercovici}, \cite{Neu} by
\begin{equation}\label{FC}\frac{d\mu_{r,0}}{dx}(x)=\frac{(\sin\varphi)^2 (\sin r\varphi)^{r-1}}{\pi (\sin(r+1)\varphi)^{r}}
\end{equation}
where we use the following parameterization of the spectral variable \(x\)
\[x=x(\varphi)=\frac{\left(\sin{(r+1)\varphi}\right)^{r+1}}{\sin{\varphi}\left(\sin{r\varphi}\right)^r},\quad 0<\varphi<\frac{\pi}{r+1}.\]
Moreover, further representations for these densities have been found earlier in the form of Meijer G-functions \cite{Penson} and in terms of real multivariate integrals \cite{LSW}, see Remark \ref{FUSS} below.

In the case \(s=1\) the product \eqref{mixedproduct} contains \(r-1\) complex Gaussians and one truncated unitary matrix. It is known that the limiting distribution of squared singular values belongs to the Raney family as we have \cite{NS}, \cite{Ml}
\[\mu_{r,1}=R_{r,1}\boxtimes R_{1,\frac{1}{2}}=R_{\frac{r+1}{2},\frac{1}{2}},\]
so that the density allows a representation in terms of elementary functions using a suitable parameterization analogous to \eqref{FC}. However, if \(s>1\) then the limiting distributions in \eqref{murs} do not belong to the Raney family anymore and the densities are not known except in the case \(r=s\) which is connected to the global density of the Jacobi Muttalib--Borodin ensemble as recently discovered by Forrester and Wang in \cite{Forrester3} (see also Section \(3\)). 

It is the aim of this paper to derive explicit representations of the densities of \eqref{murs} in the general case and to study their behavior at the boundary of their spectrum. In these regards, in Section \(2\) we develop a new approach to derive densities of measures \(\mu\) whose Stieltjes transforms satisfy a general (algebraic) equation of the form
\[P(w)-zQ(w)=0.\]
Under suitable conditions in Theorem \ref{densitygeneral} we prove that such densities allow a complex contour integral representation of the form
\[\frac{d\mu}{dx}(x)=\frac{1}{2\pi^2 x} \Re \int\limits_{\gamma_\alpha} \log\left(1-x\frac{Q(t)}{P(t)}\right)dt,\]
where the path of integration is given as the boundary of a sector in the complex plane with opening angle \(2\alpha\).
Subsequently, in Theorem \ref{densitymurs} we use this approach to derive densities of the measures \(\mu_{r,s}\) in the cases \(r\geq s+2\) of the form
\[\frac{d\mu_{r,s}}{dx}(x)=\frac{1}{2\pi^2 x}\Re\int\limits_{\gamma_{2\pi/(r+1)}}\log\left(1-x\frac{(t-1)(t+1)^s}{t^{r+1}}\right) dt.\]
In the special case \(s=0\) this gives a new representation for the densities of the Fuss-Catalan distributions.

The remaining boundary cases \(r=s+1\) and \(r=s\) are covered in Section \(3\). We show how the method of parameterization of the spectral variable can be used to derive explicit and elementary representations for the densities of the measures \(\mu_{r,r-1}\) and \(\mu_{r,r}\). In Theorem \ref{densitymurr-1} we show that we have
\[\frac{d\mu_{r,r-1}}{dx}(x)=\frac{2^{r+1} \sin(\varphi)\left(3\sin(\varphi)-\rho_r(\varphi)\sin(2\varphi)\right)}{\pi \sin(r+1)\varphi ~\rho_r(\varphi)^{r-1}\left(4-4\rho_r(\varphi)\cos(\varphi)+\rho_r(\varphi)^2\right)},
\]
where we use the parameterization 
\[x=x(\varphi)=\frac{\rho_r(\varphi)^r \sin(r+1)\varphi}{2^r \left(3\sin(\varphi)-\rho_r(\varphi)\sin(2\varphi)\right)},\quad\quad 0<\varphi<\frac{\pi}{r+1},
\]
and the function \(\rho_r\) is defined as
\[\rho_r(\varphi)=\frac{3\sin(r\varphi)}{2\sin(r-1)\varphi}-\sqrt{\left(\frac{3\sin(r\varphi)}{2\sin(r-1)\varphi}\right)^2-\frac{2\sin(r+1)\varphi}{\sin(r-1)\varphi}}, \quad\quad 0<\varphi<\frac{\pi}{r+1}.\]

In Theorem \ref{densitymurr} we use the same approach to derive the density of \(\mu_{r,r}\) (already found in \cite{Forrester3}) which also gives an alternative proof of the global density of the Jacobi Muttalib--Borodin ensemble.

\section{Density of singular values of products of complex Gaussian and truncated unitary matrices}

In this section we study the general situation of mixed products of the type
\begin{equation}\label{mixedproducts}Y_{r,s}=G_r\cdots G_{s+1}T_s \cdots T_1,
\end{equation}
where \(r\geq s+2\geq 2\) and we ask for the density of the limiting distribution of the squared singular values \(\mu_{r,s}\) as introduced in \eqref{murs}. As the approach of finding elementary expressions for the densities by introducing suitable parameterizations of the spectral variable turns out to be appropriate only in the boundary cases \(s=0,1,r-1,r\) (see also Section \(3\)) we begin this section by developing an approach to find integral representations for densities of measures whose Stieltjes transforms satisfy certain algebraic equations.

\begin{theorem}\label{densitygeneral}Let \(\mu\) be a probability measure supported on the compact interval \([0,x^{\ast}]\) with \(x^{\ast}>0\) and let its Stieltjes transform be denoted by
\[F(z)=\int\limits_0^{x^{\ast}}\frac{1}{z-t} d\mu(t).\]
Suppose that \(w(z)=zF(z)\) is an algebraic function with a branch cut on the interval \((0,x^{\ast})\) satisfying an algebraic equation of the form
\[P(w)-zQ(w)=0,\]
where \(P\) and \(Q\) are real polynomials with gcd(P,Q)=1 and \(P(t)> 0\) for \(t\in (0,1]\), \(Q'(1)> 0\) and \(\deg P \geq \deg Q +2\) such that \(\lim_{t\rightarrow +\infty}P(t)/Q(t)=+\infty\). Moreover, suppose that for all \(x>0\) the polynomial \(w\mapsto P(w)-xQ(w)\) has exactly two roots (counted with multiplicities) inside the sector
\[S_\alpha =\{z\in \mathbb{C}~|~z=te^{is}, t\geq 0, -\alpha\leq s\leq \alpha\},\]
no roots are located on the boundary, and assume that \(P\) does not have any roots on the semi-infinite ray \(\{t e^{i \alpha}~\vert~t>0\}\)  (\(\alpha \in (0, \pi/2]\) is a fixed number).
Then the measure \(\mu\) is absolutely continuous with respect to the Lebesgue measure with a strictly positive density on \((0,x^{\ast})\) given by
\begin{equation}\label{density}\frac{d\mu}{dx}(x)=\frac{1}{2\pi^2 x} \Re \int\limits_{\gamma_\alpha} \log\left(1-x\frac{Q(t)}{P(t)}\right)dt,\end{equation}
where the path of integration \(\gamma_{\alpha}\) is given as the concatenation of two semi-infinite rays \(\gamma_{\alpha}^{(1)} \oplus \gamma_{\alpha}^{(2)}\) with
\(\gamma_{\alpha}^{(1)}\) defined as the path \(t\mapsto e^{i\alpha} /t, t> 0,\) and \(\gamma_{\alpha}^{(2)}\) is defined as the positive real axis (see Figure \ref{contourintegration}). The branch of the logarithm is chosen as the analytical continuation of the principal branch starting at the point at infinity on each ray.  
\end{theorem}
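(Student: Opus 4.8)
The plan is to start from the Stieltjes--Perron inversion formula and to convert the resulting pointwise expression for the density into the claimed contour integral by an integration by parts combined with an elementary residue bookkeeping. Writing $w_+(x)=\lim_{\varepsilon\to 0^+}w(x+i\varepsilon)$ for the boundary value of $w$ from the upper half-plane, the relation $w=zF$ together with $\Im F_+(x)=-\pi\,\frac{d\mu}{dx}(x)$ gives at once
\[\frac{d\mu}{dx}(x)=-\frac{1}{\pi x}\,\Im w_+(x),\qquad 0<x<x^{\ast}.\]
Since $\mu$ is a real measure, Schwarz reflection yields $w_-(x)=\overline{w_+(x)}$, and taking boundary values in the algebraic equation shows that $w_+(x)$ and $\overline{w_+(x)}$ are a conjugate pair of roots of $w\mapsto P(w)-xQ(w)$. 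The first task is to identify these two boundary values with the two roots that, by hypothesis, lie in the sector $S_\alpha$, via a continuity/connectedness argument (as $z\to\infty$ one has $w\to 1$, forced by $F(z)\sim 1/z$ and consistent with $Q(1)=0$, $Q'(1)>0$); granting this, $\overline{w_+(x)}$ is the unique root of $P-xQ$ lying in the upper part of $S_\alpha$, and the whole problem reduces to extracting $\Im w_+(x)$ from an integral over $\gamma_\alpha$.

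For the main computation I would abbreviate $h(t)=1-x\,Q(t)/P(t)=(P(t)-xQ(t))/P(t)$ and observe that, because $\deg P\ge\deg Q+2$, one has $h(t)=1+O(t^{-2})$ and hence $\log h(t)=O(t^{-2})$ along each ray; thus the integral converges, the arc at infinity is negligible, and the boundary terms $t\log h(t)$ vanish at both ends of $\gamma_\alpha$. Integrating by parts then gives $\int_{\gamma_\alpha}\log h\,dt=-\int_{\gamma_\alpha}t\,h'(t)/h(t)\,dt$. Expanding $h'/h=\sum_j (t-a_j)^{-1}-\sum_j(t-b_j)^{-1}$, where $a_j=a_j(x)$ run through the roots of $P-xQ$ and $b_j$ through the roots of $P$ (both families of cardinality $\deg P$, and disjoint because $\gcd(P,Q)=1$), reduces everything to the elementary integrals $\int_{\gamma_\alpha}dt/(t-c)$. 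Closing $\gamma_\alpha$ by the vanishing arc at infinity turns it into the positively oriented boundary of the upper half of $S_\alpha$, and a direct evaluation of the change of argument yields
\[\int_{\gamma_\alpha}\frac{dt}{t-c}=-i\alpha+2\pi i\,\mathbf 1\{c\in S_\alpha,\ \Im c>0\}.\]
Summing over the roots, the angle-dependent terms $-i\alpha(\sum_j a_j-\sum_j b_j)$ cancel, because $\deg P\ge\deg Q+2$ forces $P$ and $P-xQ$ to share their two leading coefficients and hence to have equal root sums.

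What survives is $\int_{\gamma_\alpha}\log h\,dt=-2\pi i\bigl(\sum_{a_j\in S_\alpha,\,\Im a_j>0}a_j-\sum_{b_j\in S_\alpha,\,\Im b_j>0}b_j\bigr)$. By the hypothesis that $P-xQ$ has exactly two roots in $S_\alpha$, namely $w_+(x)$ and $\overline{w_+(x)}$, the first sum equals $\overline{w_+(x)}$; taking real parts and using $\Im\overline{w_+}=-\Im w_+$ reproduces $\Re\int_{\gamma_\alpha}\log h\,dt=-2\pi\,\Im w_+(x)$, which together with the inversion formula gives exactly \eqref{density} with the constant $1/(2\pi^2 x)$. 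Strict positivity on $(0,x^{\ast})$ then follows once $\Im w_+(x)<0$ throughout, i.e.\ once the two sector roots form a genuine nonreal conjugate pair for every interior $x$; this is precisely where "exactly two roots, none on the boundary" and $Q'(1)>0$ enter, preventing the pair from collapsing onto the real axis.

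The hard part will be the complex-analytic bookkeeping concealed in the last two steps. First, one must verify that the prescribed branch of the logarithm, continued from the point at infinity along each ray, is well defined along all of $\gamma_\alpha$; this requires that neither the zeros of $P-xQ$ nor the zeros of $P$ meet the ray $\{te^{i\alpha}\}$ or the positive real axis, and the hypotheses $P(t)>0$ on $(0,1]$, $\gcd(P,Q)=1$, and the absence of roots of $P$ on $\{te^{i\alpha}\}$ are exactly what is needed. Second, and most delicately, the surviving pole sum $\sum_{\Im b_j>0}b_j$ over the roots of $P$ inside $S_\alpha$ must not contaminate the answer: since the conjugate of such a root lies in the \emph{lower} half of $S_\alpha$, this sum is not real a priori, so one has to show that its imaginary part vanishes (for instance because $P$ has no nonreal roots in the open sector, as happens in the situations of interest where $P$ is essentially a monomial). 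Establishing this localization, i.e.\ that the integral over $\gamma_\alpha$ feels only the single root $\overline{w_+(x)}$, is the technical heart of the argument.
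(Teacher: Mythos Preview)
Your approach is genuinely different from the paper's and, once the gap you flag is closed, it works. The paper does not integrate by parts or use partial fractions at all. Instead it obtains an integral formula for the \emph{derivative} $w'(z)$: starting from the B\"urmann--Lagrange expansion of $w$ at $z=\infty$, differentiating termwise and resumming gives $w'(z)=\frac{1}{2\pi i}\int_{K_\delta(1)}\frac{dt}{P(t)/Q(t)-z}$ for large $|z|$. The key point is that the integrand here has poles only at the roots of $P-zQ$ (zeros of $P$ are \emph{not} singularities of $1/(P/Q-z)$), so one can analytically continue this formula down to the boundary value $w'_-(x)$ and then deform the small circle around $w_-(x)$ to $\gamma_\alpha$ using only the hypothesis on the location of the roots of $P-xQ$. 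Integrating in $x$ then produces $w_-(x)=-\frac{1}{2\pi i}\int_{\gamma_\alpha}\log(1-xQ/P)\,dt$ directly, and Stieltjes inversion finishes. Thus the paper never needs to locate the zeros of $P$ inside the sector, which is precisely the issue your route runs into.

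Your residue calculation lands exactly on the obstruction you identify: the surviving term $\sum_{b_j\in S_\alpha,\,\Im b_j>0}b_j$. This is not automatically real under the stated hypotheses; what is needed is that $P$ has \emph{no} roots in $S_\alpha\setminus\{0\}$. This does follow, but it requires an argument you have not supplied: one first establishes (as the paper does) that the two sector roots of $P-xQ$ are exactly $w_\pm(x)$ for every $x\in(0,x^\ast)$; then, since $w_\pm(x)\to 0$ as $x\to 0^+$ (from the branch structure of $w$ at the origin) while the roots of $P-xQ$ vary continuously and converge to roots of $P$ as $x\to 0^+$, any root of $P$ in the open sector would force a third root of $P-xQ$ in $S_\alpha$ for small $x$, contradicting the hypothesis. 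With this localisation in hand your computation closes. Note also that your splitting into individual integrals $\int_{\gamma_\alpha}dt/(t-c)$ is only conditionally convergent (the two rays must be truncated at the same radius); this is harmless once you phrase everything via $|t|\le R$ and a vanishing arc, but it should be said.
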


\begin{figure}[ht]
\centering
\includegraphics[scale=1]{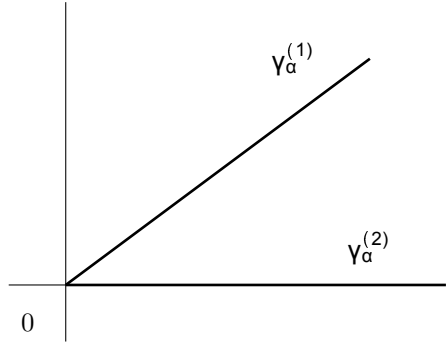}
\caption{The path of integration \(\gamma_{\alpha}\).}
\label{contourintegration}
\end{figure}

\begin{proof} We have that \(w(z)=zF(z)\) satisfies the equation \(P(w)/z-Q(w)=0\) with \(w(\infty)=1\), so by letting \(z\rightarrow \infty\) we conclude that \(Q(1)=0\). As we have \(Q'(1)> 0\) we can write \(Q(t)=(t-1)\tilde{Q}(t)\) for a polynomial \(\tilde{Q}\) with \(\tilde{Q}(1)> 0\). An application of the B\"{u}rmann-Lagrange theorem shows that \(w(z)\) is the unique solution of the equation \(P(w)-zQ(w)=0\) which is analytic at infinity with \(w(\infty)=1\) and its expansion at infinity is given by
\begin{equation}\label{expansionw}w(z)=\sum_{k=0}^{\infty}\mu_k \left(\frac{1}{z}\right)^k,~~\vert z\vert > x^{\ast},\end{equation}
where \(\mu_0=1\) and for \(k\geq 1\)
\[\mu_k=\frac{1}{k!}\frac{d^{k-1}}{dx^{k-1}}\left(\frac{P(x)}{\tilde{Q}(x)}\right)^k\Big\vert_{x=1}.\]
We will discuss the quotient \(R(t)=P(t)/Q(t)\) on the interval \((1,\infty)\). Because \(P(1)\) and \(\tilde{Q}(1)=Q'(1)\) are positive, the function \(R\) descends monotonically from \(+\infty\) near \(t=1\), and by assumption it returns to \(+\infty\) as \(t\rightarrow +\infty\). As \(w(x)\) is the unique solution of the equation \(P(w)-xQ(w)=0\) with \(w(x)\rightarrow 1\) as \(x\rightarrow \infty\), for \(t\) near \(1\) we have \(R(t)=x\) with \(t=w(x)\). Furthermore, for \(t\) near \(+\infty\) we have \(R(t)=x\) with \(t=\tilde{w}(x)\), where \(\tilde{w}\) denotes the second solution of the equation \(P(w)-xQ(w)=0\) inside the sector \(S_{\alpha}\), which has to satisfy \(\tilde{w}(x)\rightarrow +\infty\) as \(x\rightarrow +\infty\). We can conclude that the polynomial \(\tilde{Q}\) does not have any zeros inside the sector \(S_{\alpha}\), otherwise more solutions than just \(w\) and \(\tilde{w}\) could be found inside \(S_{\alpha}\) as \(x\rightarrow\infty\) (every zero of \(Q\) attracts a certain solution of the equation \(P(w)-xQ(w)=0\) as \(x\rightarrow \infty\)). The two solutions \(w(x)\) and \(\tilde{w}(x)\) have to coincide with a common value \(w^{\ast}>1\) at the branchpoint \(x=x^{\ast}\), so the derivative \(\frac{d}{dt} R(t)\) must vanish at \(t=w^{\ast}\). Moreover, using the assumption that the equation \(P(w)-xQ(w)=0\) has exactly two roots inside \(S_{\alpha}\) for all positive \(x\), we can see that \(\frac{d}{dt} R(t)\) is strictly negative on \((1,w^{\ast})\) and strictly positive on \((w^{\ast},+\infty)\). Hence, we can conclude that the quotient \(R\) is a strictly convex function on \((1,+\infty)\) with a unique minimum at \(t=w^{\ast}\) and \(x^{\ast}=R(w^{\ast})\). We can summarize the behavior of the solutions \(w(x)\) and \(\tilde{w}(x)\) on the positive real axis as follows: if we start travelling with \(x\) from \(+\infty\) along the real line towards the origin, the only solutions inside \(S_{\alpha}\) are given by \(w(x)\) and \(\tilde{w}(x)\), where \(w(x)\) emanates from \(1\) and \(\tilde{w}(x)\) emanates from \(+\infty\). As \(x\) approaches \(x^{\ast}\) from the right, \(w(x)\) strictly increases to the limit \(w^{\ast}\) whereas \(\tilde{w}(x)\) strictly decreases to the same limit. If we move \(x\) inside the cut \((0,x^{\ast})\), then the solutions \(w(x)\) and \(\tilde{w}(x)\) move to the complex plane (inside \(S_{\alpha}\)) and become complex conjugates and the sign of their imaginary parts depends on which bounday values we choose (from above or from below). At \(x=x^{\ast}\) the solutions coincide, and as we have \(P(t)/Q(t)<0\) on \((0,1)\), we can see that \(w(x)\) and \(\tilde{w}(x)\) do not coincide at any point in the interval \((0,x^{\ast})\), because for \(x\in(0,x^{\ast})\) the equation \(P(w)-xQ(w)=0\) does not have a positive solution. From this the positivity of the density of \(\mu\) on \((0,x^{\ast})\) follows.

Deriving the expression \eqref{expansionw} with respect to \(z\) we obtain
\begin{align*}w'(z)&=\frac{d}{dz} w(z) = -\frac{1}{z}\sum_{k=1}^{\infty}k\mu_k\left(\frac{1}{z}\right)^k\\
&=-\frac{1}{z}\sum_{k=1}^{\infty}\frac{1}{(k-1)!}\frac{d^{k-1}}{dx^{k-1}}\left(\frac{P(x)}{\tilde{Q}(x)}\right)^k\Big\vert_{x=1}\left(\frac{1}{z}\right)^k\\
&=-\frac{1}{z}\sum_{k=1}^{\infty}\frac{1}{2\pi i}\int\limits_{K_{\delta}(1)}\left(\frac{P(t)}{\tilde{Q}(t)}\right)^k\frac{dt}{(t-1)^k}\left(\frac{1}{z}\right)^k\\
&=-\frac{1}{z}\frac{1}{2\pi i}\int\limits_{K_{\delta}(1)}\sum_{k=1}^{\infty}\left(\frac{P(t)}{z\tilde{Q}(t)(t-1)}\right)^k dt =-\frac{1}{z}\frac{1}{2\pi i}\int\limits_{K_{\delta}(1)}\frac{dt}{1-\frac{P(t)}{\tilde{Q}(t)(t-1)z}},
\end{align*}
where \(K_{\delta}(1)\) is a positively oriented circle around \(1\) with sufficiently small radius \(\delta>0\) and the interchange of the integration with the summation is allowed because the series \(\sum_{k=1}^{\infty}\left(\frac{P(x)}{z\tilde{Q}(x)(x-1)}\right)^k\) converges uniformly with respect to \(t\) on \(K_{\delta}(1)\) if \(\vert z\vert \) is chosen sufficiently large. Thus, for large \(\vert z\vert\) we obtain
\begin{equation}\label{integralwprime}  w'(z)=\frac{1}{2\pi i}\int\limits_{K_{\delta}(1)}\frac{dt}{\frac{P(t)}{Q(t)}-z}.
\end{equation}
For large \(\vert z\vert\), the function \(w(z)\) is the only solution of the equation \(\frac{P(t)}{Q(t)}-z=0\) near \(1\) and we can replace \(K_{\delta}(1)\) by a small positively oriented circle around \(w(z)\), which we will denote by \(K(z)\):
\begin{equation}\label{integralwprime2} w'(z)=\frac{1}{2\pi i}\int\limits_{K(z)}\frac{dt}{\frac{P(t)}{Q(t)}-z}.
\end{equation}
Now we can construct a representation for the boundary values 
\[w'_+(x)=\lim_{\epsilon\rightarrow 0+} w'(x+i\epsilon) ~~~\text{and}~~~ w'_-(x)=\lim_{\epsilon\rightarrow 0+} w'(x-i\epsilon)\]
for \(x\in(0,x^{\ast})\) by means of an analytical countinuation of \eqref{integralwprime2}. We begin with \(w'_{+}(x)\) where we choose a fixed \(x\in(0,x^{\ast})\). Starting from \(z=+\infty\) we can find an analytical continuation of \eqref{integralwprime2} along the positive real line. Before we arrive at \(z=x^{\ast}\) we follow a small positively oriented semi circle around \(z=x^{\ast}\) to arrive at some point inside \((0,x^{\ast})\), from where we move to \(x\). Along this path from \(+\infty\) to \(x\) we can ensure that the circle \(K(z)\) only contains \(w(z)\) so that we arrive at
\begin{equation}\label{integralwprime3}w'_{+}(x)=\frac{1}{2\pi i}\int\limits_{K_{+}(x)}\frac{dt}{\frac{P(t)}{Q(t)}-x},
\end{equation}
where \(K_{+}(x)\) is a small positively oriented circle around \(w_{+}(x)\), which lies in the lower half-plane (as \(w(z)=zF(z)\) and \(F\) is a Stieltjes transform). In an analogous way we obtain
\begin{equation}\label{integralwprime4}w'_{-}(x)=\frac{1}{2\pi i}\int\limits_{K_{-}(x)}\frac{dt}{\frac{P(t)}{Q(t)}-x}=\overline{w'_{+}(x)},
\end{equation}
where \(K_{-}(x)\) is a small positively oriented circle around \(w_{-}(x)\) lying in the upper half-plane. In the next step we wish to make the path of integration in \eqref{integralwprime4} independent of \(x\). To this end, we observe that using Cauchy's integral theorem we can replace \(K_{-}(x)\) by the path \(\Gamma_R\) defined as concatenation of three paths
\[\Gamma_R=[0,R]\oplus\{R e^{i\varphi}~\vert~\varphi\in[0,\alpha]\}\oplus\{(R-t) e^{i\alpha}~\vert~t\in[0,R]\}.\]
Using the assumption that \(\deg P \geq \deg Q +2\) and letting \(R\rightarrow\infty\) we arrive at
\begin{equation}\label{integralwprime5}w'_{-}(x)=\frac{1}{2\pi i}\int\limits_{0}^{+\infty}\frac{dt}{\frac{P(t)}{Q(t)}-x}-\frac{1}{2\pi i}\int\limits_{0}^{e^{i\alpha}\infty}\frac{dt}{\frac{P(t)}{Q(t)}-x}=\frac{1}{2\pi i}\int\limits_{\gamma_{\alpha}}\frac{dt}{\frac{P(t)}{Q(t)}-x},
\end{equation}
because the integral over the path \(\{R e^{i\varphi}~\vert~\varphi\in[0,\alpha]\}\) vanishes as \(R\rightarrow\infty\). Moreover, for \(x\in(0,x^{\ast})\) we can explicitly find a primitive of the right hand side in \eqref{integralwprime5} so that
\begin{equation}\label{integralw1}w_{-}(x)=-\frac{1}{2\pi i}\int\limits_{\gamma_{\alpha}}\log\left(1-x\frac{Q(t)}{P(t)}\right) dt,
\end{equation}
where the contant of integration vanishes because both sides have to vanish at \(x=0\). The branch of the logarithm is defined as follows: On \(\gamma_{\alpha}^{(1)}\) we can start at the point at infinity with the principal branch of \(\log\) and on the way towards the origin we can find a continuous branch of the argument of \(1-x\frac{Q(t)}{P(t)}\) which we use to define the logarithm. On \(\gamma_{\alpha}^{(2)}\), the positive real line, we have always \(1-x\frac{Q(t)}{P(t)}>0\) so that we can use the real logarithm here. By an application of the Stieltjes inversion formula we can now compute the density of \(\mu\) on \((0,x^{\ast})\) by
\begin{align*} \frac{d\mu}{dx}(x) &=\frac{1}{2\pi i x}\left(w_{-}(x)-w_{+}(x)\right) = \frac{1}{\pi x} \Im (w_{-}(x))\\
&=\frac{1}{2\pi^2 x}\Re \int\limits_{\gamma_{\alpha}}\log\left(1-x\frac{Q(t)}{P(t)}\right) dt.
\end{align*}
\end{proof}

\begin{remark}\label{endpoints}The integral representation for the density of \(\mu\) in \eqref{density} can be used to derive the behavior of the density at the endpoints of the support \([0,x^{\ast}]\). An alternative way is the following (we use the notation from the proof of Theorem \ref{densitygeneral}): As the point \(x^{\ast}\) is a branch point connecting the two solutions \(w(z)\) and \(\tilde{w}(z)\) in a cross-wise manner, the function \(z\mapsto w(x^{\ast}+z^2)\) can be considered as a conformal mapping in a neighborhood of \(z=0\) taking the values of the branch \(w\) inside the sector 
\[\left\{z\in\mathbb{C}~\vert~z=te^{is}, t\geq 0, -\pi/2 <s<\pi/2\right\}.\] 
Hence, it has an expansion of the form
\[w(x^{\ast}+z^2)=w^{\ast}+\sum_{k=1}^{\infty} a_k z^k\]
with \(a_1\neq 0\). A computation gives for \(0<x<x^{\ast}\)
\[w_{+}(x)=\lim_{\epsilon\rightarrow 0+}\left\{w^{\ast}+\sum_{k=1}^{\infty} a_k (i\sqrt{x^{\ast}-x}+\epsilon)^k\right\}=w^{\ast}+\sum_{k=1}^{\infty} a_k (i\sqrt{x^{\ast}-x})^k\]
and
\[w_{-}(x)=\lim_{\epsilon\rightarrow 0+}\left\{w^{\ast}+\sum_{k=1}^{\infty} a_k (-i\sqrt{x^{\ast}-x}+\epsilon)^k\right\}=w^{\ast}+\sum_{k=1}^{\infty} a_k (-i\sqrt{x^{\ast}-x})^k.\]
This leads to
\begin{equation}\label{rightendpoint}\lim_{x\rightarrow x^{\ast}-}\frac{1}{\sqrt{x^{\ast}-x}}\frac{d\mu}{dx}(x)=\lim_{x\rightarrow x^{\ast}-}\frac{1}{2\pi i x}\frac{w_{-}(x)-w_{+}(x)}{\sqrt{x^{\ast}-x}}=\frac{a_1}{2\pi x^{\ast}}>0,
\end{equation}
which means that the density of \(\mu\) vanishes like a square root at the right endpoint of the support.
In contrast to that, the behavior of the density at the left endpoint of the support, which is the origin, will be determined by the order of the zero of the polynomial \(P\) at the origin. To see this, let us denote the order of the zero of \(P\) at the origin with \(\ell\), where we necessarily have \(\ell\geq 2\). The number \(\ell\) relates to the order of the branch point at the origin of the function \(w(z)\) which means that \(\ell\) branches are connected through the origin. Thus, we can consider the function \(z\mapsto w(z^{\ell})\) as a conformal mapping in the neighborhood of the origin taking the values of the branch \(w\) inside the sector \(\left\{z\in\mathbb{C}~\vert~z=te^{is}, t\geq 0, 0 <s<2\pi/\ell\right\}\). Hence, we have an expansion of the form
\[w(z^{\ell}) = \sum_{k=1}^{\infty}b_k z^k\]
with \(b_1\neq 0\). For small \(x>0\) we obtain
\[w_{+}(x)=\lim_{\epsilon\rightarrow 0+}\sum_{k=1}^{\infty}b_k \left(x^{1/\ell}+i\epsilon\right)^k=\sum_{k=1}^{\infty}b_k x^{k/\ell}\]
and
\[w_{-}(x)=\lim_{\epsilon\rightarrow 0+}\sum_{k=1}^{\infty}b_k \left(e^{2\pi i/\ell}x^{1/\ell}+\epsilon\right)^k=\sum_{k=1}^{\infty}b_k e^{2\pi ik/\ell} x^{k/\ell}.\]
This gives us
\begin{align}\label{leftendpoint}\nonumber \lim_{x\rightarrow 0+} x^{(\ell-1)/\ell}\frac{d\mu}{dx}(x)&=\lim_{x\rightarrow 0+} \frac{x^{(\ell-1)/\ell}}{2\pi i x}(w_{-}(x)-w_{+}(x))\\
&= \frac{b_1}{2\pi i}\left(e^{2\pi i/\ell}-1\right)>0,
\end{align}
which means that the density behaves like \(x^{-(\ell-1)/\ell}\) as \(x\rightarrow 0+\). 
\end{remark}
We want to point out that the constants involved in the leading terms at both endpoints of the spectrum can be obtained in form of integral representations by studying the expression \eqref{density}.

Next we want to use Theorem \ref{densitygeneral} to obtain the densities for the measures \(\mu_{r,s}\) defined in \eqref{murs} for \(r\geq s+2\). To this end, we first need to define two quantities
\begin{equation} \label{w_ast}
w_{r,s}^{\ast} := \frac{1-s + \sqrt{(1-s)^2 + 4(r+1)(r-s)}}{2(r-s)} >1
\end{equation}
and
\begin{equation} \label{x_ast}
x_{r,s}^{\ast} := \frac{r+1}{s+1} \frac{(w_{r,s}^{\ast})^r}{(w_{r,s}^{\ast} + 1)^{s-1}\left(w_{r,s}^{\ast} - \frac{s - 1}{s+1}\right)} >0.
\end{equation}

\begin{theorem}\label{densitymurs} The measure \(\mu_{r,s}\) is supported on the interval \([0,x_{r,s}^{\ast}]\) and has a strictly positive density on the interval \((0,x_{r,s}^{\ast})\) given by
\begin{equation}\label{den} \frac{d\mu_{r,s}}{dx}(x)=\frac{1}{2\pi^2 x}\Re\int\limits_{\gamma_{2\pi/(r+1)}}\log\left(1-x\frac{(t-1)(t+1)^s}{t^{r+1}}\right) dt,
\end{equation}
where the path \(\gamma_{2\pi/(r+1)}\) and the branch of the logarithm are defined as in Theorem \ref{densitygeneral}. Moreover, the density behaves like \(x^{-r/(r+1)}\) as \(x\rightarrow 0+\) and it vanishes like a square root as \(x\rightarrow x^{\ast}-\). Hence, only the boundary behavior at the origin depends on the number of matrices involved in the product \eqref{mixedproducts}.
\end{theorem}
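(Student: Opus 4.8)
The plan is to recognize $\mu_{r,s}$ as an instance of the setup of Theorem \ref{densitygeneral} with $P(w) = w^{r+1}$, $Q(w) = (w-1)(w+1)^{s}$ and sector half-angle $\alpha = 2\pi/(r+1)$; once the hypotheses are checked, the representation \eqref{den} and the endpoint asymptotics drop out of Theorem \ref{densitygeneral} and Remark \ref{endpoints}. First I would pin down the algebraic equation satisfied by $w(z) = zF(z)$. Since free multiplicative convolution multiplies $S$-transforms, and $S_{R_{r-s+1,1}}(z) = (1+z)^{-(r-s)}$ (the Fuss--Catalan law of order $r-s$), while a direct computation from the moments $\binom{2k}{k}/4^{k}$ of the arcsine law gives $S_{R_{1,1/2}}(z) = (z+2)/(z+1)$, one obtains
\[
S_{\mu_{r,s}}(z) = (1+z)^{-(r-s)}\left(\frac{z+2}{z+1}\right)^{s} = \frac{(z+2)^{s}}{(1+z)^{r}}.
\]
Inverting the link between the $S$-transform and the Stieltjes transform (using $\psi(1/z) = w-1$ together with $\chi(v) = \tfrac{v}{1+v}S(v)$ at $v = w-1$) then yields $w^{r+1} = z(w-1)(w+1)^{s}$, that is $P(w) - zQ(w) = 0$ with the claimed $P,Q$.

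The routine hypotheses I would verify directly: $\gcd(P,Q) = 1$, since $P$ vanishes only at $0$ and $Q$ only at $\pm 1$; $P(t) = t^{r+1} > 0$ on $(0,1]$; $Q'(1) = 2^{s} > 0$; and $\deg P - \deg Q = (r+1)-(s+1) = r-s \ge 2$, which is exactly the standing assumption $r \ge s+2$ and also forces $P(t)/Q(t) \to +\infty$. The requirement $\alpha \le \pi/2$ holds automatically when $s \ge 1$ (then $r \ge 3$, so $\alpha = 2\pi/(r+1) \le \pi/2$); only the pure Fuss--Catalan case $r = 2$, $s = 0$ falls outside this range and would be treated separately by first applying Theorem \ref{densitygeneral} with a smaller admissible angle and then deforming the contour to $\gamma_{2\pi/(r+1)}$ via Cauchy's theorem.

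The main obstacle is the sector condition. Because the roots of $p_x(w) := w^{r+1} - x(w-1)(w+1)^{s}$ vary continuously with $x$ and $0$ is never among them ($p_x(0) = x \ne 0$), the number of roots inside the open sector $S_\alpha$ is constant in $x$ as long as no root crosses the boundary rays $\arg w = \pm\alpha$. Since $(r+1)\alpha = 2\pi$, on the ray $w = \rho e^{i\alpha}$ we have $w^{r+1} = \rho^{r+1} > 0$, so a boundary root would force $(w-1)(w+1)^{s} = \rho^{r+1}/x$ to be positive real. Writing $\arg\!\big((w-1)(w+1)^{s}\big) = \theta_1(\rho) + s\,\theta_2(\rho)$ with $\theta_1 = \arg(\rho e^{i\alpha}-1)$ and $\theta_2 = \arg(\rho e^{i\alpha}+1)$, differentiation gives $\theta_1' = -\sin\alpha/|w-1|^{2} < 0$ and $\theta_2' = \sin\alpha/|w+1|^{2} > 0$, with $\theta_1(0) = \pi$, $\theta_2(0) = 0$ and $\theta_1,\theta_2 \to \alpha$ as $\rho \to \infty$. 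The crux is to show that this total argument stays strictly within $(0,2\pi)$ for every $\rho > 0$, so that it is never a multiple of $2\pi$ and $(w-1)(w+1)^{s}$ is never positive real; combined with a root count in a single limiting regime (as $x \to \infty$ exactly the two roots near $w = 1$ and near $w \approx x^{1/(r-s)}$ lie in $S_\alpha$, the others collapsing toward $-1$ or escaping along rays at angle $\pm 2\pi/(r-s) > \alpha$), this fixes the count at precisely two for all $x > 0$ and rules out boundary roots. I expect controlling the interior maximum of $\theta_1 + s\theta_2$, which can rise above $\pi$ precisely when $\cos\alpha > (s-1)/(s+1)$, to be the delicate step, and this is where the inequality $r \ge s+2$ must be used.

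Finally I would identify the support and the endpoint behavior. The right endpoint is $x^{*} = R(w^{*})$ with $R = P/Q$ and $w^{*}$ the unique minimizer of $R$ on $(1,\infty)$; setting $R'(w) = 0$ reduces to the quadratic $(r-s)w^{2} + (s-1)w - (r+1) = 0$, whose root in $(1,\infty)$ is exactly $w_{r,s}^{*}$ of \eqref{w_ast} (its value at $w=1$ is $-2 < 0$), and substituting this back, using the critical-point relation to simplify $(w^{*}-1)(w^{*}+1)$, reproduces $x_{r,s}^{*}$ of \eqref{x_ast}. The square-root vanishing at $x^{*}$ is immediate from \eqref{rightendpoint}, while the behavior at the origin follows from \eqref{leftendpoint} with $\ell$ equal to the order of the zero of $P(w) = w^{r+1}$ at $0$, namely $\ell = r+1$, giving the exponent $-(\ell-1)/\ell = -r/(r+1)$; as this depends on $r$ but not on $s$, only the origin records the total number of factors in the product.
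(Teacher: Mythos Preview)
Your high-level plan coincides with the paper's: set $P(w)=w^{r+1}$, $Q(w)=(w-1)(w+1)^s$, $\alpha=2\pi/(r+1)$, verify the hypotheses of Theorem~\ref{densitygeneral}, and read off the endpoint exponents from Remark~\ref{endpoints}. Your derivation of the algebraic equation via $S$-transforms and your identification of $w_{r,s}^{*}$, $x_{r,s}^{*}$ from the quadratic $(r-s)w^{2}+(s-1)w-(r+1)=0$ are correct and in fact more explicit than what the paper records. Your remark that $\alpha=2\pi/3>\pi/2$ when $r=2$, $s=0$ is a fair point the paper passes over.

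The genuine gap is exactly where you place it. Your continuity scheme reduces the sector hypothesis to the claim that $(w-1)(w+1)^{s}\notin\mathbb{R}_{>0}$ along the ray $w=\rho e^{i\alpha}$, equivalently that the continuous argument $\theta_{1}(\rho)+s\,\theta_{2}(\rho)$ never reaches $2\pi$; you then say you ``expect'' this to hold but do not prove it. The obvious bound $\theta_{1}<\pi$, $\theta_{2}<\alpha$ only gives $\theta_{1}+s\theta_{2}<\pi+2\pi s/(r+1)$, which exceeds $2\pi$ as soon as $2s>r+1$ (already for $r=6$, $s=4$), so for $s$ close to $r-2$ a sharper estimate is genuinely required and none is supplied. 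The paper circumvents this analytic difficulty by a different device: it establishes the root count by Rouch\'e's theorem and induction on $s$, first comparing $f_{0}(w)=w^{r+1}-x(w-1)$ with $g(w)=w^{r+1}+x$ on the boundary of a truncated sector, and then comparing $f_{s}$ with $f_{s-1}$ via the strict inequality $|f_{s}-f_{s-1}|<|f_{s}|+|f_{s-1}|$; the strictness in the induction step is in turn reduced to showing that the curve $t\mapsto(te^{i\alpha}-1)(te^{i\alpha}+1)^{s-1}$ avoids the sector $\{-\alpha\le\arg z\le 0\}$, which is where $s\le r-2$ enters. Your argument-tracking approach is aimed at the same fact, but as written it leaves the decisive inequality open, whereas the inductive Rouch\'e comparison packages the estimate in a form that can actually be checked.
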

\begin{proof} Let \(F(z)\) denote the Stieltjes transform of the measure \(\mu_{r,s}\). It can be derived using notions from free probability \cite{JPM} that the function \(w(z)=zF(z)\) satisfies the algebraic equation
\[w^{r+1}-z(w-1)(w+1)^s=0,\]
and it can be checked that \(w(z)\) has a branch cut on \((0,x_{r,s}^{\ast})\) with \(w(x_{r,s}^{\ast})=w_{r,s}^{\ast}\) as defined in \eqref{w_ast} and \eqref{x_ast}.
Setting \(P(t)=t^{r+1}\) and \(Q(t)=(t-1)(t+1)^s\) we can readily check the conditions \(gcd(P,Q)=1\), \(P(t)>0\) on \((0,1]\), \(Q'(1)>0\), \(\deg P \geq \deg Q +2\) and \(\lim_{t\rightarrow +\infty}P(t)/Q(t)=+\infty\). Moreover, \(P\) clearly has no roots on the semi-infinite ray \(\{te^{i2\pi/(r+1)}~\vert~t>0\}\). It remains to show that for all \(x>0\) the equation \(P(w)-xQ(w)=0\) has exactly two roots inside the sector \(S_{2\pi/(r+1)}\) and no roots on the boundary. To this end, we first consider the case \(s=0\) and define the functions \(f(w)=w^{r+1}-x(w-1)\) and \(g(w)=w^{r+1}+x\) for a fixed \(x>0\). We will show that \(f\) has exactly two roots inside \(\tilde{S}_{R}\) and no roots are on the boundary for sufficiently large \(R>0\), where we define
\[\tilde{S}_R=\{z\in\mathbb{C}~\vert~z=te^{is}, 0\leq t\leq R, -2\pi/(r+1) \leq s \leq 2\pi/(r+1)\}.\]
On the ray \(w=te^{i2\pi/(r+1)}\) we have by the triangle inequality
\begin{align*}\vert f(w)-g(w)\vert &= x\vert w\vert = x\vert w\vert -\vert w^{r+1}+x\vert+\vert w^{r+1}+x\vert\\
&\leq \vert w^{r+1}+x-xw\vert +\vert w^{r+1}+x\vert =\vert f(w)\vert + \vert g(w)\vert. 
\end{align*}
It can be checked by an elementary computation that this inequality is strict so we obtain 
\[\vert f(w)-g(w)\vert<\vert f(w)\vert + \vert g(w)\vert, ~w=te^{i2\pi/(r+1)}.\]
Because \(f\) and \(g\) are real polynomials we have the same inequality on the complex conjugate ray \(w=te^{-i2\pi/(r+1)}\). Moreover, choosing \(R>0\) sufficiently large, we can ensure that we always have 
\[\vert f(w)-g(w)\vert<\vert f(w)\vert + \vert g(w)\vert\]
on the entire boundary of \(\tilde{S}_{R}\). Hence, there are no roots on the boundary of \(\tilde{S}_{R}\), and by Rouch\'{e}'s theorem we can deduce that \(f(w)\) and \(g(w)\) have the same number of roots inside \(\tilde{S}_{R}\) (counted with multiplicities) for all \(R>0\) sufficiently large. As we can explicitly compute that \(g(w)\) has exactly two roots inside \(\tilde{S}_{R}\) we obtain that the same statement is true for \(f(w)\). In order to prove this statement for \(f_s(w)=w^{r+1}-x(w-1)(w+1)^s\) we now proceed inductively with respect to \(s\in\{0,\ldots,r-2\}\). So we assume the claim holds for \(f_{s-1}\) for an \(s\in\{1,\ldots,r-2\}\). On the ray \(w=te^{i2\pi/(r+1)}\) we have the inequality
\begin{align*} &\vert f_{s}(w)-f_{s-1}(w)\vert = x\vert w \vert \vert w-1\vert \vert w+1\vert^s\\
&\leq \vert w^{r+1}-x(w-1)(w+1)^{s-1}-xw(w-1)(w+1)^{s-1}\vert\\ 
&   \quad\quad+\vert w^{r+1}-x(w-1)(w+1)^{s-1}\vert\\
&=\vert f_s(w)\vert +\vert f_{s-1}(w)\vert.
\end{align*}
In this inequality we have equality if and only if we have
\begin{equation}\label{ineq1}\Im\left\{\left(w^{r+1}-x(w-1)(w+1)^{s-1}\right)\overline{w(w-1)(w+1)^{s-1}}\right\}=0
\end{equation}
and
\begin{equation}\label{ineq2}\Re\left\{\left(w^{r+1}-x(w-1)(w+1)^{s-1}\right)\overline{w(w-1)(w+1)^{s-1}}\right\}\geq0.
\end{equation}
Solving for \(x>0\) in \eqref{ineq1} gives
\[x=\frac{\Im\left\{w^{r+1}\overline{w(w-1)(w+1)^{s-1}}\right\}}{\Im\{\overline{w}\}\vert w-1\vert^2\vert w+1\vert^{2s-2}}\]
and, by replacing \(w=te^{i2\pi/(r+1)}\), from this we obtain
\[\Im\left\{e^{i2\pi/(r+1)}\left(te^{i2\pi/(r+1)}-1\right)\left(te^{i2\pi/(r+1)}+1\right)^{s-1}\right\} >0.\]
Furthermore, replacing \(x\) in \eqref{ineq2} we obtain after some computation
\[\Im\left\{\left(te^{i2\pi/(r+1)}-1\right)\left(te^{i2\pi/(r+1)}+1\right)^{s-1}\right\} \leq 0.\]
This means, that we have equality if and only if the number 
\[\left(te^{i2\pi/(r+1)}-1\right)\left(te^{i2\pi/(r+1)}+1\right)^{s-1}\] 
lies in the sector
\[\{z=te^{is}~\vert~t\geq 0, -2\pi/(r+1)\leq s\leq 0\}.\]
However, as we have \(s\leq r-2\), a careful study shows that the trace of the complex contour
\[t\mapsto \left(te^{i2\pi/(r+1)}-1\right)\left(te^{i2\pi/(r+1)}+1\right)^{s-1},~~~t\geq 0,\]
stays at a positive distance from this sector. Hence, we arrive at
\[\vert f_{s}(w)-f_{s-1}(w)\vert<\vert f_s(w)\vert +\vert f_{s-1}(w)\vert\]
on the ray \(w=te^{i2\pi/(r+1)}\) and by symmetry this remains true on the complex conjugate ray \(w=te^{-i2\pi/(r+1)}\). Again by choosing a sufficiently large \(R>0\) we can ensure that this inequality holds true for the entire boundary of \(\tilde{S}_{R}\). We conclude that \(f_s(w)\) does not have roots on the boundary and \(f_s(w)\) and \(f_{s-1}(w)\) have the same number of roots inside \(\tilde{S}_{R}\) for all \(R>0\) sufficiently large, which proves by induction that \(f_s(w)\) has exactly two roots inside \(S_{2\pi/(r+1)}\). 
Thus, all conditions of Theorem \ref{densitygeneral} are satisfied and the representation \eqref{den} follows. The behavior of the density at the endpoints of the support follows immediately from Remark \ref{endpoints}.
\end{proof}

\begin{remark} The explicit form of the densities in \eqref{den} allows us to produce plots. In Figure \ref{denr7s3plot} we see a plot of the density \(\mu_{r,s}\) in the case \(r=7\) and \(s=3\) on its support \([0,x_{7,3}^{\ast}]\), where we have
\[x_{7,3}^{\ast}=\frac{2 (w_{7,3}^{\ast})^7}{(w_{7,3}^{\ast}+1)^2 (w_{7,3}^{\ast}-\frac{1}{2})}\approx 2.015\]
with
\[w_{7,3}^{\ast}=\frac{\sqrt{33}-1}{4}.\]
In Figure \ref{denr7s3plot2} we see a plot of this density in a neighborhood of the right endpoint, which indicates that it vanishes like a square root.

\end{remark}


\begin{figure}[ht]
\centering

\includegraphics[scale=0.5]{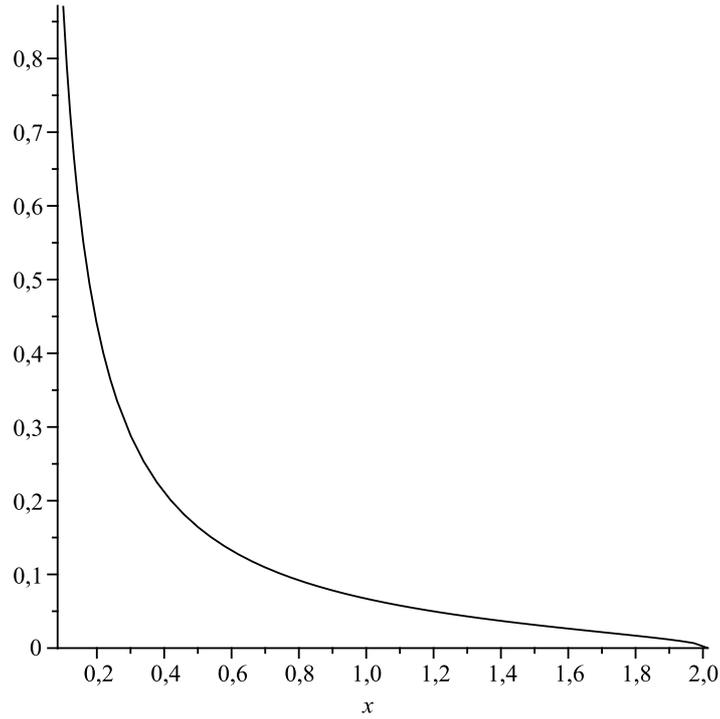}
\caption{Plot of \(\mu_{7,3}\) on its entire support.}
\label{denr7s3plot}
\end{figure}

\begin{figure}[ht]
\centering
\includegraphics[scale=0.5]{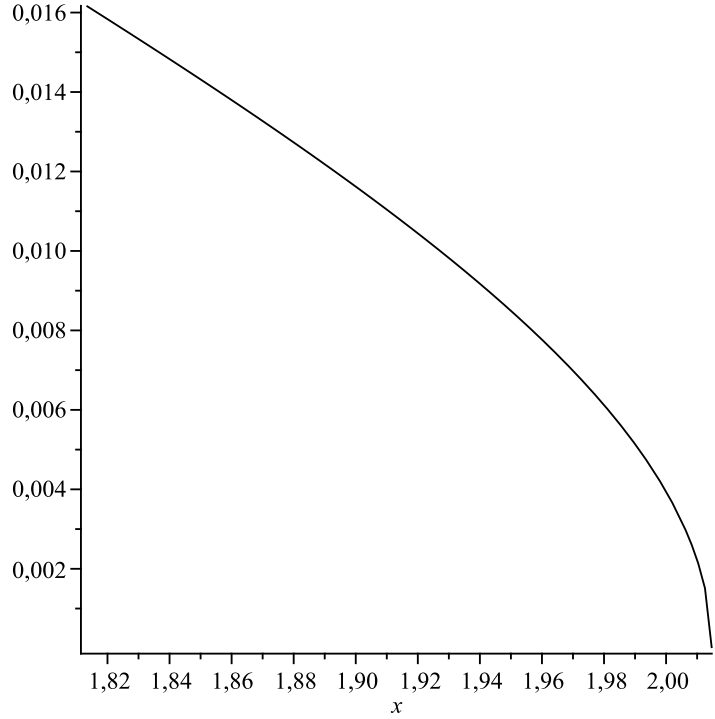}
\caption{Plot of \(\mu_{7,3}\) in the neighborhood of the right endpoint of its support.}
\label{denr7s3plot2}
\end{figure}

\begin{remark}\label{FUSS} The special case \(s=0\) in Theorem \ref{densitymurs} gives a new representation for the densities of the Fuss-Catalan distributions of order \(r\)
\[\frac{d\mu_{r,0}}{dx}(x)=\frac{1}{2\pi^2 x}\Re\int\limits_{\gamma_{2\pi/(r+1)}}\log\left(1-x\frac{t-1}{t^{r+1}}\right) dt,\quad\quad 0<x<\frac{(r+1)^{r+1}}{r^r},\]
which does not make use of a parameterization of the spectral variable (in contrast to the representation in \eqref{FC}). This representation can be seen as a companion to the representation in terms of Meijer G-functions found in \cite{Penson}. Moreover, both of these representations can be interpreted as one-dimensional complex analoga to the real multivariate integral representation found in \cite{LSW} 
\[\frac{d\mu_{r,0}}{dx}(x)=\frac{1_{(0,K]}(x)}{B\left(\frac{1}{2}, \frac{1}{2}-\frac{1}{r}\right)}\int\limits_{[0,1]^r} \frac{(\tau K-x)^{1/r-1/2}}{\sqrt{x}(\tau K)^{1/r}}F(t_1,\ldots,t_r)1_{\{\tau k \geq 0\}} ~d^r(t), \]
where \(K=\frac{(r+1)^{r+1}}{r^r}\), \(\tau=\prod\limits_{j=1}^r t_j\), \(B(a,b)\) denotes the Betafunction and
\[F(t_1,\ldots,t_r)=\frac{t_1^{1/(r+1)-1}(1-t_1)^{(r-1)/(2r+2)-1} \prod\limits_{j=2}^r t_j^{j/(r+1)-1} (1-t_j)^{j/(r(r+1))-1} }{B\left(\frac{1}{r+1}, \frac{r-1}{2r+2}\right) \prod\limits_{j=2}^rB\left(\frac{j}{r+1}, \frac{j}{r(r+1)}\right)}.\]

\end{remark}

\section{Density of singular values of products with at most one complex Gaussian matrix}

In this section we first consider products of the form
\[Y_r = G_r T_{r-1}\cdots T_{1},\]
where \(r>1\), \(G_r\) is a complex Ginibre matrix and the matrices \(T_j\) are truncated Haar distributed unitary matrices with the same conditions on the truncations as in the preceeding sections.
In order to describe the spectral density of \(\mu_{r,r-1}\) as introduced in \eqref{murs} we define the following quantities

\[c_r=\frac{3r-\sqrt{r^2+8}}{2(r-1)}\in(1,2),\]

\[x_{r,r-1}^{\ast}=\frac{c_r^{r+1}}{2^r\left(c_r-1\right)\left(2-c_r\right) }>0,\]
and
\[\rho_r(\varphi)=\frac{3\sin(r\varphi)}{2\sin(r-1)\varphi}-\sqrt{\left(\frac{3\sin(r\varphi)}{2\sin(r-1)\varphi}\right)^2-\frac{2\sin(r+1)\varphi}{\sin(r-1)\varphi}}, \quad\quad 0<\varphi<\frac{\pi}{r+1}.\]

\begin{theorem}\label{densitymurr-1} The measure \(\mu_{r,r-1}\) is supported on the interval \(\left[0,x_{r,r-1}^{\ast}\right]\) and has the density 
\begin{equation}\label{densmurr-1} \frac{d\mu_{r,r-1}}{dx}(x)=\frac{2^{r+1} \sin(\varphi)\left(3\sin(\varphi)-\rho_r(\varphi)\sin(2\varphi)\right)}{\pi \sin(r+1)\varphi ~\rho_r(\varphi)^{r-1}\left(4-4\rho_r(\varphi)\cos(\varphi)+\rho_r(\varphi)^2\right)},
\end{equation}
where 
\begin{equation}\label{para}x=x(\varphi)=\frac{\rho_r(\varphi)^r \sin(r+1)\varphi}{2^r \left(3\sin(\varphi)-\rho_r(\varphi)\sin(2\varphi)\right)},\quad\quad 0<\varphi<\frac{\pi}{r+1}.
\end{equation}
\end{theorem}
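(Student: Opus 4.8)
The plan is to adapt the parameterization strategy that produced \eqref{FC}, but with a substitution tailored to the factor \((w+1)^{r-1}\). Following \cite{JPM}, the function \(w(z)=zF(z)\), where \(F\) is the Stieltjes transform of \(\mu_{r,r-1}\), satisfies the algebraic equation obtained by setting \(s=r-1\) in the proof of Theorem \ref{densitymurs}, namely
\[w^{r+1}-z(w-1)(w+1)^{r-1}=0,\qquad z=R(w):=\frac{w^{r+1}}{(w-1)(w+1)^{r-1}}.\]
Here \(\deg P=\deg Q+1\), so the contour representation of Theorem \ref{densitygeneral} is not available and the parameterization method is the natural substitute. What I retain from that framework is the branch structure: \(w\) has a cut on \((0,x_{r,r-1}^{\ast})\) with complex conjugate boundary values \(w_{\pm}\), the value \(w_{-}(x)\) lies in the upper half-plane, and by Stieltjes inversion \(\frac{d\mu_{r,r-1}}{dx}(x)=\frac{1}{\pi x}\Im w_{-}(x)\) for \(x\in(0,x_{r,r-1}^{\ast})\). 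Everything thus reduces to parameterizing the curve traced by \(w_{-}(x)\), i.e. the locus \(\Im R(w)=0\) in the upper half-plane.

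The key device is the M\"{o}bius substitution \(\eta=\frac{w}{w+1}\), equivalently \(w=\frac{\eta}{1-\eta}\). Since \(w+1=\frac{1}{1-\eta}\) and \(w-1=\frac{2\eta-1}{1-\eta}\), the high-degree factor \((w+1)^{r-1}\) is absorbed and the equation collapses to
\[z=\frac{\eta^{r+1}}{(1-\eta)(2\eta-1)},\]
whose denominator is only quadratic in \(\eta\) (this is exactly what singles out \(s=r-1\), and analogously \(s=r\), as the tractable boundary cases: the residual power \((1-\eta)^{r-s}\) reduces to a single factor). On the cut I would write \(\eta=\frac{\rho}{2}e^{i\varphi}\) with \(\varphi\in(0,\tfrac{\pi}{r+1})\) as the parameter and \(\rho>0\) to be determined. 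Imposing that \(z\) be real is the condition \(\arg\eta^{r+1}\equiv\arg\bigl((1-\eta)(2\eta-1)\bigr)\) modulo \(\pi\); writing it as \(\tan\bigl((r+1)\varphi\bigr)=\Im(2\eta^{2}-3\eta+1)/\Re(2\eta^{2}-3\eta+1)\), clearing denominators, and collapsing the result with \(\sin a\cos b-\cos a\sin b=\sin(a-b)\) yields precisely
\[\sin(r-1)\varphi\,\rho^{2}-3\sin(r\varphi)\,\rho+2\sin(r+1)\varphi=0.\]
The branch reproducing the Stieltjes transform (with \(w=\frac{\eta}{1-\eta}\) in the upper half-plane) corresponds to the smaller root, which is exactly \(\rho_r(\varphi)\).

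It then remains to substitute \(\eta=\frac{\rho_r(\varphi)}{2}e^{i\varphi}\) back. Because \(z\) is real, taking imaginary parts in \(\eta^{r+1}=z\,(1-\eta)(2\eta-1)\) gives the clean identity \(x=\Im\eta^{r+1}/\Im\bigl[(1-\eta)(2\eta-1)\bigr]\); evaluating the two imaginary parts produces the real value \eqref{para} directly. Likewise \(\Im w_{-}=\Im\frac{\eta}{1-\eta}=\frac{2\rho_r\sin\varphi}{4-4\rho_r\cos\varphi+\rho_r^{2}}\), and combining this with \(x\) through \(\frac{d\mu_{r,r-1}}{dx}=\frac{1}{\pi x}\Im w_{-}\) yields the density \eqref{densmurr-1}. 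For the endpoints I would take limits in \(\varphi\): as \(\varphi\to\frac{\pi}{r+1}^{-}\) the factor \(\sin(r+1)\varphi\to0\) forces \(x\to0\); as \(\varphi\to0^{+}\) the quadratic degenerates to \((r-1)\rho^{2}-3r\rho+2(r+1)=0\), so \(\rho_r(\varphi)\to c_r\), whence \(x(\varphi)\to\frac{c_r^{r}(r+1)}{2^{r}(3-2c_r)}\); using that \(c_r\) satisfies \((r-1)c_r^{2}-3rc_r+2(r+1)=0\) this last expression simplifies to the stated \(x_{r,r-1}^{\ast}\).

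The routine parts are the trigonometric collapse to the quadratic and the back-substitution. The genuine obstacle is global rather than algebraic: one must verify that \(\eta=\frac{\rho_r(\varphi)}{2}e^{i\varphi}\) really traces the Stieltjes branch \(w_{-}\) across the entire cut. Concretely this requires showing that the discriminant \(\bigl(\tfrac{3\sin r\varphi}{2\sin(r-1)\varphi}\bigr)^{2}-\tfrac{2\sin(r+1)\varphi}{\sin(r-1)\varphi}\) stays positive on \((0,\tfrac{\pi}{r+1})\) so that \(\rho_r\) is real, that \(3\sin\varphi-\rho_r\sin2\varphi>0\) there so that \(x>0\) and the substitution avoids the poles \(\eta=1\) and \(\eta=\tfrac12\), that the associated \(w\) indeed remains in the upper half-plane, and that \(\varphi\mapsto x(\varphi)\) is a strict decreasing bijection of \((0,\tfrac{\pi}{r+1})\) onto \((0,x_{r,r-1}^{\ast})\). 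Establishing this monotonicity and consistent selection of the smaller root—ensuring no spurious branch is picked up—is where the real work lies.
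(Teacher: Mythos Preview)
Your proposal is correct and follows essentially the same route as the paper: the paper's substitution \(v=\frac{2w}{w+1}\) is exactly \(v=2\eta\) in your notation, and with your polar ansatz \(\eta=\tfrac{\rho}{2}e^{i\varphi}\) this becomes the paper's ansatz \(v=\rho e^{i\varphi}\), leading to the identical quadratic for \(\rho_r(\varphi)\) and the same Stieltjes-inversion computation. The global checks you flag (positivity of the discriminant, monotonicity of \(x(\varphi)\), correct branch selection) are precisely the points the paper handles only by the remark that the parameterization ``is a strictly decreasing function'' and by tracking the two real solutions of \eqref{alg4} as \(x\) decreases to \(x_{r,r-1}^{\ast}\).
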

\begin{proof} Let \(F\) denote the Stieltjes transform of \(\mu_{r,r-1}\)
\[F(z)=\int\limits_0^{a}\frac{1}{z-t} d\mu_{r,r-1}(t),\]
where \(a>0\) is the finite right endpoint of the support of \(\mu_{r,r-1}\). Using properties of the \(S\)-transform from free probability theory it can be derived that the function \(w(z)=zF(z)\) satisfies the algebraic equation
\begin{equation}\label{alg3} w^{r+1}-z(w-1)(w+1)^{r-1}=0.
\end{equation}
Studying the branch points of the algebraic function given by \eqref{alg3} shows that \(w\) has a branch cut on the interval \((0,a)\) with
\[a=x_{r,r-1}^{\ast}\]
and \(w\) is the unique solution analytic on \(\mathbb{C}\cup{\{\infty\}}\backslash[0,x_{r,r-1}^{\ast}]\) taking the value \(1\) at infinity. If we again define the function \(v\) by
\[v(z)=\frac{2w(z)}{w(z)+1},\]
then \(v\) is a further analytic function on \(\mathbb{C}\cup{\{\infty\}}\backslash[0,x_{r,r-1}^{\ast}]\) taking the value \(1\) at infinity (it follows from equation \eqref{alg3} that \(w\) never takes the value \(-1\)). Moreover, as we have 
\begin{equation}\label{w}w(z)=\frac{v(z)}{2-v(z)},
\end{equation}
from \eqref{alg3} we see that \(v\) satisfies the algebraic equation
\begin{equation}\label{alg4} v^{r+1}+2^{r}z(v-1)(v-2)=0.
\end{equation}
It can be checked computationally that, like \(w\), \(v\) has the a branch cut on \((0,x_{r,r-1}^{\ast})\), and we are interested in finding the boundary values of \(v\) on the cut explicitly by introducing a suitable parameterization of the spectral variable. To this end, we first observe that for \(z=x>x_{r,r-1}^{\ast}\) the equation \eqref{alg4} has exactly two positive solutions. One solution tends to \(1\) as \(x\rightarrow +\infty\), so this solution is given by \(v\), and the second solution tends to \(2\) as \(x\rightarrow +\infty\). The two solutions approach each other if \(x\) starts travelling from \(+\infty\) towards \(x_{r,r-1}^{\ast}\) and they meet at \(x=x_{r,r-1}^{\ast}\) both taking the value \(c_r\). If we move with \(x\) inside the interval \((0,x_{r,r-1}^{\ast})\) then the solutions move away from the real axis and become complex conjugates. In order to describe them we make the ansatz \(v=\rho_r(\varphi)e^{i\varphi}\) with the positive function \(\rho_r(\varphi)>0\) to be determined. Substituting \(v=\rho_r(\varphi)e^{i\varphi}\) into \eqref{alg4} and taking the imaginary parts gives
\[\rho_r(\varphi)^{r+1}\sin(r+1)\varphi + x2^r\left(\rho_r(\varphi)^2 \sin(2\varphi)-3 \rho_r(\varphi)\sin(\varphi)\right)=0.\]
After dividing by \(\rho_r(\varphi)\) and solving for \(x\) we obtain
\begin{equation}\label{x}x=\frac{\rho_r(\varphi)^{r} \sin(r+1)\varphi}{2^r \left(3 \sin(\varphi)-\rho_r(\varphi) \sin(2\varphi)\right)}.
\end{equation}
Moreover, substituting \(v=\rho_r(\varphi)e^{i\varphi}\) into \eqref{alg4} and taking the real parts gives
\[\rho_r(\varphi)^{r+1}\cos(r+1)\varphi + x2^r\left(\rho_r(\varphi)^2 \cos(2\varphi)-3 \rho_r(\varphi)\cos(\varphi)+2\right)=0.\]
After replacing \(x\) using \eqref{x}, dividing by \(\rho_r(\varphi)^{r}\), rearranging the terms and using some standard trigonometric identities we arrive at the quadratic equation
\[\rho_r(\varphi)^2-\frac{3\sin(r\varphi)}{\sin(r-1)\varphi}\rho_r(\varphi) +\frac{2\sin(r+1)\varphi}{\sin(r-1)\varphi}=0,\]
which can be solved for \(\rho_r(\varphi)\) by
\[\rho_r(\varphi)=\frac{3\sin(r\varphi)}{2\sin(r-1)\varphi}-\sqrt{\left(\frac{3\sin(r\varphi)}{2\sin(r-1)\varphi}\right)^2-\frac{2\sin(r+1)\varphi}{\sin(r-1)\varphi}}.\]
This function is well defined for \(0<\varphi<\frac{\pi}{r+1}\) and we choose this solution of the quadratic equation as it starts with the value \(c_r\) for \(\varphi \rightarrow 0\) and vanishes as \(\varphi\rightarrow \frac{\pi}{r+1}\) (in contrast to the second solution). We use this explicit form for \(\rho_r(\varphi)\) and \eqref{x} in order to define the parameterization \eqref{para}, which is a strictly decreasing function on \((0,\frac{\pi}{r+1})\) starting at \(x_{r,r-1}^{\ast}\) and ending at \(0\). Hence, in these coordinates we can explicitly find the boundary values of \(v\) on the cut \((0,x_{r,r-1}^{\ast})\) by
\[v_{+}(x)=\rho_r(\varphi)e^{-i\varphi}\]
and 
\[v_{-}(x)=\rho_r(\varphi)e^{i\varphi},\]
where \(x=x(\varphi)\) is given by \eqref{x}. Now, in regards of \eqref{w}, by Stieltjes inversion we obtain for the density
\begin{align*}\frac{d\mu_{r,r-1}}{dx}(x(\varphi))=&\frac{1}{2\pi i x(\varphi)}\left(w_{-}(x(\varphi))-w_{+}(x(\varphi))\right)\\
=&\frac{1}{2\pi i x(\varphi)}\left(\frac{v_{-}(x(\varphi))}{2-v_{-}(x(\varphi))}-\frac{v_{+}(x(\varphi))}{2-v_{+}(x(\varphi))}\right)\\
=&\frac{1}{\pi i x(\varphi)}\frac{v_{-}(x(\varphi))-v_{+}(x(\varphi))}{\vert 2-v_{-}(x(\varphi))\vert^2}\\
=&\frac{2 \rho_r(\varphi) \sin(\varphi)}{\pi x(\varphi) \vert 2-v_{-}(x(\varphi))\vert^2},
\end{align*}
which after some further simplification leads to \eqref{densmurr-1}.
\end{proof}
\begin{remark} The expression in \eqref{densmurr-1} can be used to study the behavior of the density at the boundary of the support in an analogous way as in Remark \ref{boundaryrr}. It turns out that we have at the left endpoint of the support
\[\frac{d\mu_{r,r-1}}{dx}(x)\sim ax^{-r/(r+1)}, \quad x\rightarrow 0+,\]
and at the right endpoint of the support 
\[\frac{d\mu_{r,r-1}}{dx}(x)\sim b~ \sqrt{1-\frac{x}{x_{r,r-1}^{\ast}}}, \quad x\rightarrow x_{r,r-1}^{\ast},\]
with positive constants \(a\) and \(b\), which can be found explicitly. However, here we forgo the details of the derivation and the specification of these constants as their explicit forms turn out to be rather cumbersome. 
\end{remark}
\begin{remark} Figure \ref{densmurr-1345} shows the plots of the densities for \(\mu_{r,r-1}\) for \(r=3,4,5\) (from right to left).
\begin{figure}[!ht]
\centering
\includegraphics[scale=0.5]{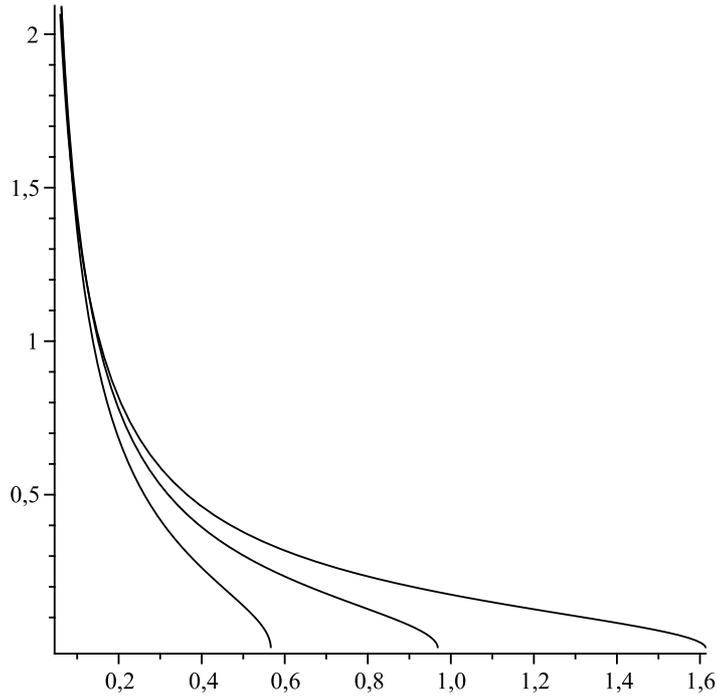}
\caption{Densities of \(\mu_{r,r-1}\) for \(r=3,4,5\) (from right to left).}
\label{densmurr-1345}
\end{figure}
\end{remark}

Next we investigate the densities of the measures \(\mu_{r,s}\) defined in \eqref{murs} in the case \(s=r\). Hence, we investigate the weak almost sure limit of the eigenvalues of the product
\[\left(T_r \cdots T_1\right)^{\ast}\left(T_r \cdots T_1\right)\]
in the regime described below \eqref{arcsine} given by the \(r\)-fold free multiplicative convolution of the arcsine measure on \([0,1]\), which is the Raney distribution \(R_{1,\frac{1}{2}}\). Thus, we have
\[\mu_{r,r}=R_{1,\frac{1}{2}}^{\boxtimes r}.\]

Recently, it was found by Forrester and Wang \cite{Forrester3} that the measure \(\mu_{r,r}\) coincides with the global distribution of the Jacobi Muttalib--Borodin ensemble and its density can be derived using information about the corresponding kernel. However, in the following theorem we rederive the density by working it out directly from the algebraic equation satisfied by its Stieltjes transform.

\begin{theorem}\label{densitymurr} The measure \(\mu_{r,r}\) is supported on the interval \(\left[0,\frac{(r+1)^{r+1}}{2^{r+1}r^r}\right]\) and has the density
\begin{equation}\label{densmurr}\frac{d\mu_{r,r}}{dx}\left(x\right)=\frac{2^{r+2} \sin(\varphi)^2 \sin(r\varphi)^{r+1}}{\pi \sin((r+1)\varphi)^r \left(4\sin(\varphi)^2\sin(r\varphi)^2+\sin((r-1)\varphi)^2\right)}
\end{equation}
where 
\[x=x(\varphi)=\frac{\sin((r+1)\varphi)^{r+1}}{2^{r+1} \sin(\varphi)\sin(r\varphi)^r},~~0<\varphi<\frac{\pi}{r+1}.\]
\end{theorem}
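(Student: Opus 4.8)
The plan is to mirror the strategy of Theorem \ref{densitymurr-1} and work directly from the algebraic equation satisfied by $w(z)=zF(z)$, where $F$ is the Stieltjes transform of $\mu_{r,r}=R_{1,\frac12}^{\boxtimes r}$. First I would invoke the $S$-transform computation from \cite{JPM} (the $s=r$ specialization of the algebraic equation $w^{r+1}-z(w-1)(w+1)^s=0$ used in Section 2) to obtain
\[w^{r+1}-z(w-1)(w+1)^{r}=0,\]
and argue, as for \eqref{alg3}, that $w$ is the branch analytic on $\mathbb{C}\cup\{\infty\}\setminus[0,x^{\ast}]$ with $w(\infty)=1$ whose cut lies on the support $(0,x^{\ast})$.

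Second, I would apply the same Möbius substitution $v=\frac{2w}{w+1}$ as in the proof of Theorem \ref{densitymurr-1}, so that $w=\frac{v}{2-v}$, $w+1=\frac{2}{2-v}$ and $w-1=\frac{2(v-1)}{2-v}$. Substituting and clearing the common factor $(2-v)^{-(r+1)}$ linearizes the block $(w+1)^{r}$ and collapses the equation to
\[v^{r+1}-2^{r+1}z\,(v-1)=0.\]
The key structural observation is that this is precisely the Fuss--Catalan equation governing $\mu_{r,0}$ (the $s=0$ case, with $Q(v)=v-1$) evaluated at the rescaled argument $2^{r+1}z$; hence $v$ inherits the Fuss--Catalan parameterization of \eqref{FC}, now with its cut on $(0,x^{\ast})$ where $x^{\ast}=\frac{(r+1)^{r+1}}{2^{r+1}r^{r}}$. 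This explains in advance why the stated elementary formula should exist.

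Third, I would make the boundary-value ansatz $v_{+}(x)=\rho(\varphi)e^{-i\varphi}$, $v_{-}(x)=\rho(\varphi)e^{i\varphi}$ on the cut (with $v_{+}$ in the lower half-plane, consistent with $w=zF$). Because $Q(v)=v-1$ is now \emph{linear}, the situation is simpler than for $\mu_{r,r-1}$: taking imaginary parts of $v^{r+1}-2^{r+1}x(v-1)=0$ gives $2^{r+1}x=\rho^{r}\sin((r+1)\varphi)/\sin\varphi$, and substituting this into the real part yields, after dividing by $\rho^{r}$ and using $\sin((r+1)\varphi)\cos\varphi-\cos((r+1)\varphi)\sin\varphi=\sin(r\varphi)$, the single linear relation
\[\rho(\varphi)=\frac{\sin((r+1)\varphi)}{\sin(r\varphi)}.\]
This produces the parameterization $x(\varphi)=\sin((r+1)\varphi)^{r+1}/\bigl(2^{r+1}\sin\varphi\,\sin(r\varphi)^{r}\bigr)$; a Taylor expansion as $\varphi\to0+$ gives the limit $x^{\ast}=\frac{(r+1)^{r+1}}{2^{r+1}r^{r}}$, while $\sin((r+1)\varphi)\to0$ as $\varphi\to\frac{\pi}{r+1}-$ gives $x\to0$, so one must still check that $x(\varphi)$ is strictly monotone and thus traverses the whole support $(0,x^{\ast})$.

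Finally, Stieltjes inversion gives $\frac{d\mu_{r,r}}{dx}=\frac{1}{\pi x}\Im w_{-}$ with $w_{-}=\frac{v_{-}}{2-v_{-}}=\frac{\rho e^{i\varphi}}{2-\rho e^{i\varphi}}$, so $\Im w_{-}=\frac{2\rho\sin\varphi}{\lvert2-\rho e^{i\varphi}\rvert^{2}}$ and $\lvert2-\rho e^{i\varphi}\rvert^{2}=4-4\rho\cos\varphi+\rho^{2}$. The main obstacle, and the only genuinely computational step, is the trigonometric simplification of this denominator: substituting $\rho=\sin((r+1)\varphi)/\sin(r\varphi)$ and multiplying through by $\sin(r\varphi)^{2}$, I would verify the identity
\[\sin(r\varphi)^{2}\bigl(4-4\rho\cos\varphi+\rho^{2}\bigr)=4\sin^{2}\varphi\,\sin^{2}(r\varphi)+\sin^{2}((r-1)\varphi),\]
which follows by expanding $\sin((r+1)\varphi)=\sin(r\varphi)\cos\varphi+\cos(r\varphi)\sin\varphi$ and recognizing the perfect square $\bigl(\sin(r\varphi)\cos\varphi-\cos(r\varphi)\sin\varphi\bigr)^{2}=\sin^{2}((r-1)\varphi)$. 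Assembling $\tfrac1x$, the factor $2\rho\sin\varphi$, and this denominator then yields \eqref{densmurr} directly. I expect the bookkeeping of this identity (and the monotonicity of $x(\varphi)$) to be where care is needed; everything else is a clean specialization of the $\mu_{r,r-1}$ argument, made easier by the linearity of $Q$.
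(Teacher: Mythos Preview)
Your proposal is correct and follows essentially the same route as the paper: the same algebraic equation for $w$, the same M\"obius substitution $v=2w/(w+1)$ reducing to the Fuss--Catalan equation $v^{r+1}-2^{r+1}z(v-1)=0$, and the same Stieltjes inversion. The only difference is cosmetic: the paper quotes the boundary values $v_\pm(x)=\frac{\sin(r+1)\varphi}{\sin(r\varphi)}e^{\mp i\varphi}$ from the known Fuss--Catalan parameterization (citing \cite{Forrester2,Neu}), whereas you rederive them via the real/imaginary-part ansatz and also spell out the trigonometric identity that the paper hides behind ``after some simplification.''
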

\begin{proof} Let \(F\) denote the Stieltjes transform of \(\mu_{r,r}\)
\[F(z)=\int\limits_0^{x^{\ast}}\frac{1}{z-t} d\mu_{r,r}(t),\]
where \(x^{\ast}>0\) is the right endpoint of the support of \(\mu_{r,r}\) (as the measure is the free multiplicative convolution of compactly supported measures on the positive real axis, \(x^{\ast}\) has to be finite). Using properties of the \(S\)-transform from free probability theory it can be derived (see, e.g., \cite{JPM}) that the function \(w(z)=zF(z)\) satisfies the algebraic equation
\begin{equation}\label{alg1} w^{r+1}-z(w-1)(w+1)^r=0.
\end{equation}
Studying the branch points of the algebraic function given by \eqref{alg1} shows that \(w\) has a branch cut on the interval \((0,x^{\ast})\) with
\[x^{\ast}=\frac{(r+1)^{r+1}}{2^{r+1}r^r}\]
and \(w\) is the unique solution analytic on \(\mathbb{C}\cup{\{\infty\}}\backslash[0,x^{\ast}]\) taking the value \(1\) at infinity. However, if we define the function \(v\) by
\[v(z)=\frac{2w(z)}{w(z)+1},\]
then \(v\) is a further analytic function on \(\mathbb{C}\cup{\{\infty\}}\backslash[0,x^{\ast}]\) taking the value \(1\) at infinity (it follows from equation \eqref{alg1} that \(w\) never takes the value \(-1\)). Moreover, as we have 
\[w(z)=\frac{v(z)}{2-v(z)},\]
from \eqref{alg1} we see that \(v\) satisfies the algebraic equation
\begin{equation}\label{alg2} v^{r+1}-2^{r+1}z(v-1)=0.
\end{equation}
Up to a scaling in the argument, this is the equation for the Stieltjes transforms in the case \(s=0\), which coincides with the Fuss-Catalan case. It is known (see, e.g., \cite{Forrester2, Neu}) that the boundary values of \(v\) on the branch cut \((0,x^{\ast})\) can be stated explicitly by
\[v_{+}(x)=\frac{\sin(r+1)\varphi}{\sin(r\varphi)}e^{-i\varphi}\]
and
\[v_{-}(x)=\frac{\sin(r+1)\varphi}{\sin(r\varphi)}e^{i\varphi},\]
if we choose the parameterization 
\[x=x(\varphi)=\frac{\sin((r+1)\varphi)^{r+1}}{2^{r+1} \sin(\varphi)\sin(r\varphi)^r},~~0<\varphi<\frac{\pi}{r+1}.\]
Hence, in these coordinates, by means of the Stieltjes inversion formula we obtain for the density
\begin{align*}\frac{d\mu_{r,r}}{dx}\left(x\right)=&\frac{1}{2\pi i x(\varphi)}\left(w_{-}(x(\varphi)-w_{+}(x(\varphi))\right)\\
=&\frac{1}{2\pi i x(\varphi)}\left(\frac{v_{-}(x(\varphi))}{2-v_{-}(x(\varphi))}-\frac{v_{+}(x(\varphi))}{2-v_{+}(x(\varphi))}\right)\\
=&\frac{1}{\pi x(\varphi)}\Im\left(\frac{\sin((r+1)\varphi)e^{i\varphi}}{2\sin(r\varphi)-\sin((r+1)\varphi)e^{i\varphi}}\right),
\end{align*}
which gives \eqref{densmurr} after some simplification.
\end{proof}
\begin{remark} In the special case \(r=1\) the density in \eqref{densmurr} reduces to the well-known arcsine measure on \([0,1]\). 
\end{remark}
\begin{remark} \label{boundaryrr}The behavior at the endpoints of the support can be derived from \eqref{densmurr} like in \cite{Forrester1}, Corollary 2.5. It turns out that we have
\[\frac{d\mu_{r,r}}{dx}\left(x\right)\sim\frac{\sin\frac{\pi}{r+1}}{\pi}x^{-r/(r+1)},~~x\rightarrow 0+,\]
and, provided that \(r>1\),
\[\frac{d\mu_{r,r}}{dx}\left(x\right)\sim\frac{2^{r+2+1/2}}{\pi}\frac{r^{r+1/2}}{(r+1)^{r+1/2}(r-1)^2}\sqrt{1-\frac{2^{r+1}r^r}{(r+1)^{r+1}}x},~~x\rightarrow \frac{(r+1)^{r+1}}{2^{r+1}r^r}-.\]
\end{remark}
\begin{remark} We want to point out an interesting relation between the spectral distribution \(\mu_{r,r}\) and the asymptotic distribution of zeros of Jacobi-Pi\~{n}eiro polynomials for large multi-indices on the diagonal, which has been found recently in \cite{NVA}. To this end, let us denote the Stieltjes transform of the limiting distribution of zeros rescaled in such a way that it is supported on \(\left[0,\frac{(r+1)^{r+1}}{r^r}\right]\) by \(G(z)\). As we explicitly know the moments of this measure, we can write
\begin{align*}G(z)=&\sum_{k=0}^\infty \binom{(r+1)k}{k} \frac{1}{z^{k+1}}\\
=&\frac{1}{z}\ _r F_{r-1}\left(\frac{1}{r+1}, \frac{2}{r+1}, \ldots, \frac{r}{r+1}; \frac{1}{r}, \frac{2}{r}, \ldots, \frac{r-1}{r} \big\vert \frac{(r+1)^{r+1}}{r^r z}\right),
\end{align*}
where \(_r F_{r-1}\) is the standard notation for generalized hypergeometric functions. It is known that such hypergeometric functions are algebraic (see, e.g., \cite{Pere}) and it can be shown that \(G\) satisfies the algebraic equation
\[\left(zG(z)\right)^{r+1}- \frac{r^r}{(r+1)^{r+1}} z\left(zG(z)-1\right)\left(zG(z)+\frac{1}{r}\right)^r=0.\]
This equation is of a similar type as \eqref{alg1}, which enables us to find a functional relation between \(G\) and the Stieltjes transform \(F\) of \(\mu_{r,r}\) in terms of a rational transformation
\[F(z)=\frac{(r+1)2^r G\left(2^{r+1}z\right)}{1+(r-1)2^r z G\left(2^{r+1}z\right)}.\]
This relation gives an alternative way to derive the corresponding densities from each other. For instance, recalling from \cite{NVA}, Theorem 1.1, that the density \(w_{JP}\) of the asymptotic zero distribution of the Jacobi-Pi\~{n}eiro polynomials on \(\left[0,\frac{(r+1)^{r+1}}{r^r}\right]\) is given by
\begin{align*}&w_{JP}(\hat{x}(\varphi))\\
&=\frac{r+1}{\pi \hat{x}(\varphi)} \frac{\sin\varphi \sin r\varphi \sin(r+1)\varphi}{(r+1)^2 \sin^2 r\varphi -2r(r+1)\sin(r+1)\varphi \sin r\varphi \cos \varphi + r^2 \sin^2(r+1)\varphi},\end{align*}
with
\[\hat{x}(\varphi)=2^{r+1}x(\varphi)=\frac{\sin((r+1)\varphi)^{r+1}}{\sin(\varphi)\sin(r\varphi)^r},~~0<\varphi<\frac{\pi}{r+1},\]
by Stieltjes inversion we obtain
\begin{align*}&\frac{d\mu_{r,r}}{dx}(x(\varphi))=\frac{1}{2\pi i}\left(F_{-}(x(\varphi))-F_{+}(x(\varphi))\right)\\
&=\frac{1}{2\pi i}\left(\frac{(r+1)2^r G_{-}\left(\hat{x}(\varphi)\right)}{1+(r-1)2^r x(\varphi) G_{-}\left(\hat{x}(\varphi)\right)}-\frac{(r+1)2^r G_{+}\left(\hat{x}(\varphi)\right)}{1+(r-1)2^r x(\varphi) G_{+}\left(\hat{x}(\varphi)\right)}\right)\\
&=(r+1)2^{r}\left\vert\frac{2(r+1)\sin r\varphi -2 r \sin(r+1)\varphi~ e^{i\varphi}}{2(r+1)\sin r\varphi -(r+1)\sin(r+1)\varphi ~e^{i\varphi}} \right\vert^2 ~\frac{G_{-}\left(\hat{x}(\varphi)\right)-G_{+}\left(\hat{x}(\varphi)\right)}{2\pi i}\\
&=(r+1)2^{r}\left\vert\frac{2(r+1)\sin r\varphi -2 r \sin(r+1)\varphi~ e^{i\varphi}}{2(r+1)\sin r\varphi -(r+1)\sin(r+1)\varphi ~e^{i\varphi}} \right\vert^2 ~w_{JP}(\hat{x}(\varphi)).
\end{align*}
In the above derivation we additionally used that
\[\hat{x}(\varphi)G_{-}\left(\hat{x}(\varphi)\right)=\frac{\sin(r+1)\varphi~e^{i\varphi}}{(r+1)\sin(r\varphi)-r\sin(r+1)\varphi~e^{i\varphi}}\]
and it can be verified by further computation that the result agrees with the formula found in \eqref{densmurr}.
\end{remark}
\begin{remark} Figure \ref{densmurr345} shows the plots of the densities for \(\mu_{r,r}\) for \(r=3,4,5\) (from right to left).
\begin{figure}[!ht]
\centering
\includegraphics[scale=0.5]{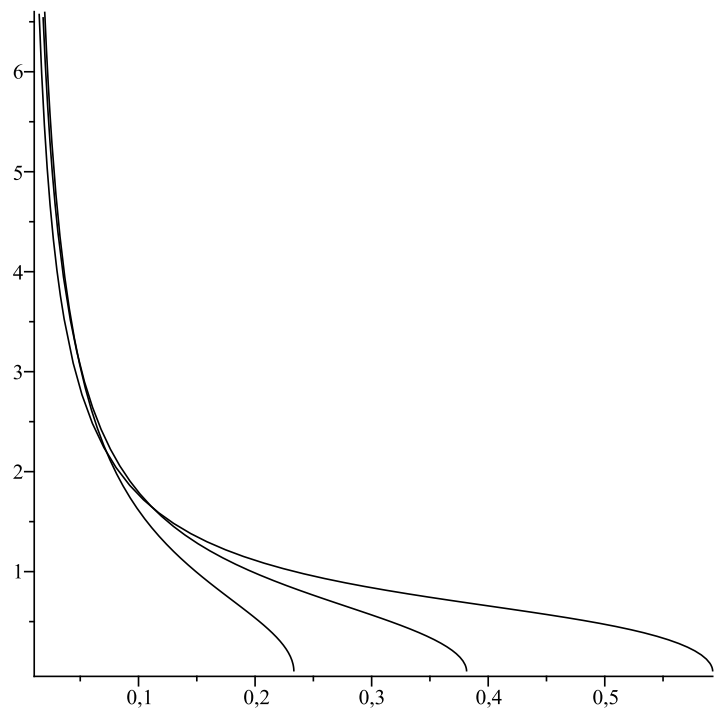}
\caption{Densities of \(\mu_{r,r}\) for \(r=3,4,5\) (from right to left).}
\label{densmurr345}
\end{figure}
\end{remark}

\section*{Acknowledgements} Thorsten Neuschel is a Research Associate with the FRS-FNRS (Belgian National Scientific Research Fund).


\begin{thebibliography}{99}

\bibitem{Ake1} G. Akemann, Z. Burda, Universal microscopic correlation functions for products of independent Ginibre matrices, J. Phys. A 45 (2012), 465201.

\bibitem{Ake2} G. Akemann, Z. Burda, M Kieburg, T. Nagao, Universal microscopic correlation functions for products of truncated unitary matrices, J. Phys. A: Math. Theor. 47 (2014) 255202.

\bibitem{Ake3} G. Akemann, J. Ipsen, Recent exact and asymptotic results for products of independent random matrices, Acta Physica Polonica B Vol. 46 No 9 (2015), 1747--1784.

\bibitem{Ake4} G. Akemann, J. Ipsen, M. Kieburg, Products of Rectangular Random Matrices: Singular Values and Progressive Scattering, Phys. Rev. E 88, 052118 (2013).

\bibitem{Bercovici} Bercovici, H., Pata, V.: Stable laws and domains of attraction in free probability theory. With an appendix
by P. Biane. Ann. Math. 149, 1023–1060 (1999)

\bibitem{Biane} P. Biane, On the free convolution with a semi-circular distribution, Indiana Univ. Math. J. 46 (1997), no. 3, 705–718.

\bibitem{Burda1} Z. Burda, R. Janik, B. Waclaw, Spectrum of the product of independent random Gaussian matrices, Phys. Rev. E 81 (2010) 041132.

\bibitem{Burda2} Z. Burda, A. Jarosz, G. Livan, M. Novak, A. Swiech, Eigenvalues and singular values of products of rectangular random Gaussian matrices, Phys. Rev. E 82 (2010) 061114.

\bibitem{CKW} T. Claeys, A. Kuijlaars, D. Wang, Correlation kernels for sums and products of random matrices. Random Matrices Theory Appl. 4 (2015), no. 4, 1550017.

\bibitem{Forrester1} P. Forrester, Eigenvalue statistics for product complex Wishart matrices, Journal of Physics A 47 (2014): 345202.

\bibitem{Forrester2} P. Forrester, D. Liu, Raney Distributions and Random Matrix Theory, Journal of Statistical Physics 158(5), 1051--1082, 2015.

\bibitem{Forrester3} P. Forrester, D. Wang, Muttalib--Borodin Ensembles in Random Matrix
Theory -- Realisations and Correlation Functions, preprint arXiv:1502.07147, 2015.

\bibitem{JPM} W. Gawronski, T. Neuschel, D. Stivigny, Jacobi polynomial moments and
products of random matrices, arXiv:1407.3656, 2014.

\bibitem{KKS} M. Kieburg, A. Kuijlaars, D. Stivigny, Singular Value Statistics of Matrix Products with Truncated Unitary Matrices, International Mathematics Research Notices, to appear.

\bibitem{KS} A. Kuijlaars, D. Stivigny, Singular values of products of random matrices and polynomial ensembles. Random Matrices Theory Appl. 3 (2014), no. 3, 1450011.

\bibitem{KZ} A. Kuijlaars, L. Zhang, Singular values of products of Ginibre random matrices, multiple orthogonal polynomials and hard edge scaling limits, Communications in Mathematical Physics 332 (2014): 759--781.

\bibitem{LSW} D. Liu, C. Song, Z. Wang, On explicit probability densities associated with Fuss-Catalan numbers, Proceedings of the AMS, 139(10), 3735--3738, 2011.

\bibitem{LW} D. Liu, Y. Wang, Universality for Products of Random Matrices I: Ginibre and Truncated Unitary Cases, International Mathematics Research Notices, to appear.

\bibitem{MP} V. Marchenko and L. Pastur, Distribution of eigenvalues for some sets of random matrices, Math. Sb. 72, 507 (1967).

\bibitem{Ml} W. Mlotkowski, Fuss-Catalan numbers in noncommutative probability, Documenta Mathematica 15 (2010), 939--955.

\bibitem{Neu} T. Neuschel, Plancherel-Rotach Formulae for Average Characteristic Polynomials of Products of Ginibre Random Matrices and the Fuss-Catalan Distribution, Random Matrices: Theory and Applications 3(1), 1450003, 2015.

\bibitem{NS} T. Neuschel, D. Stivigny, Asymptotics for characteristic polynomials of Wishart type products of complex Gaussian and truncated unitary random matrices, preprint arXiv:1407.2755, 2014.

\bibitem{NVA} T. Neuschel, W. Van Assche, Asymptotic zero distribution of Jacobi-Pi\~{n}eiro and multiple Laguerre polynomials, preprint arXiv:1509.04542, 2015.
 
\bibitem{Penson} K. Penson, K. \.{Z}yczkowski, Product of Ginibre matrices: Fuss-Catalan and Raney distributions, Phys. Rev. E 83 (6) (2011) 061118.

\bibitem{Pere} A. Perelomov, Hypergeometric solutions of some algebraic equations, Theoretical and Mathematical Physics, 140(1): 895--904, 2004.

\bibitem{Speicher} R. Speicher, Free Probability Theory, Chapter 22 of The Oxford handbook of random matrix theory, Oxford Univ. Press, Oxford, 2011.

\bibitem{Szego} G. Szeg{\H{o}}, Orthogonal polynomials, American Mathematical Society, Colloquium Publications, Vol. XXIII, American Mathematical Society, Providence, R.I., 1975.

\bibitem{Voiculescu} D. Voiculescu, K. Dykema and A. Nica, Free random variables, CRM Monograph Series, Vol. 1, American Mathematical Society, Providence, RI, 1992.

\end{thebibliography}
\end{document}